\numberwithin{equation}{section}
\def\indeq{\quad{}}           
\def\pv{\mathop{\mathrm{p.v.}}}
\definecolor{darkblue}{rgb}{0,0,0.7}
\newcommand{\norm}[1]{\| #1 \|}
\newcommand{\be}{\beta}
\newcommand{\de}{\delta}
\newcommand{\e}{\epsilon}
\newcommand{\ga}{{\gamma}}
\newcommand{\la}{\lambda}
\newcommand{\Om}{{\Omega}}
\newcommand{\si}{\sigma}
\newcommand{\td}{\tilde}
\renewcommand{\th}{\theta}
\newcommand{\R}{{\mathbb R }}\newcommand{\RR}{{\mathbb R }}
\newcommand{\N}{{\mathbb N}}
\newcommand{\pd}{{\partial}}
\newcommand{\nb}{{\nabla}}
\newcommand{\I}{\infty}
\renewcommand{\div}{\mathop{\mathrm{div}}}
\newcommand{\supp}{\mathop{\mathrm{supp}}}
\newcommand{\donothing}[1]{{}}
\newcommand{\EQ}[1]{\begin{equation}\begin{split} #1 \end{split}\end{equation}}
\newcommand{\EQN}[1]{\begin{equation*}\begin{split} #1 \end{split}\end{equation*}}
\DeclareMathOperator*{\esssup}{ess\,sup}
\newcommand{\xRightarrow}[2][]{\ext@arrow 0359\Rightarrowfill@{#1}{#2}}
\newcommand{\loc}{\mathrm{loc}} 
\newcommand{\far}{\mathrm{far}} 
\newcommand{\near}{\mathrm{near}} 
\newcommand{\uloc}{\mathrm{uloc}}
\newtheorem{theorem}{Theorem}[section]
\newtheorem{lemma}[theorem]{Lemma}
\theoremstyle{definition}
\newtheorem{definition}[theorem]{Definition}
\theoremstyle{remark}
\numberwithin{equation}{section}
\newtheorem{corollary}[theorem]{Corollary}
\begin{document}
\title{The structure of weak  solutions to the Navier-Stokes equations} 

%    Information for first author
\author{Zachary Bradshaw}
%    Address of record for the research reported here
\address{Department of Mathematical Sciences, University of Arkansas}
%    Current address
%\curraddr{Department of Mathematical Sciences, University of Arkansas}
\email{zb002@uark.edu}
%    \thanks will become a 1st page footnote.
\thanks{ZB was supported in part by the  NSF grant DMS-2307097.}

%    Information for second author
\author{Igor Kukavica}
\address{Department of Mathematics,
	University of Southern California}
\email{kukavica@usc.edu}
\thanks{IK was supported in part by the NSF grant DMS-2205493.}

%    General info
\subjclass[2020]{Primary 35Q30, 76D05}
\date{\today}

\keywords{Navier-Stokes equations, parasitic solutions, local pressure expansion, mild solutions}

\maketitle

\begin{abstract}  
The existence of  superfluous   solutions to the Navier-Stokes equations in the whole space implies that not all solutions with uniformly locally bounded energy satisfy a useful local pressure expansion. We prove that every weak solution in a parabolic uniformly local $L^2$ class can be obtained as a transgalilean transformation of a solution satisfying the local pressure expansion in a distributional sense. This gives a powerful representation theorem for a large class of solutions. We use this structure to obtain a  sufficient condition for the local pressure expansion.
\end{abstract}

%\tableofcontents

\section{Introduction}\label{sec.intro}

The Navier-Stokes equations describe the evolution of a viscous incompressible fluid's velocity field $u$ and associated scalar pressure~$p$. In particular, $u$ and $p$ are required to satisfy
\EQ{\label{eq.NSE}
&\partial_tu-\Delta u +u\cdot\nabla u+\nabla p = 0,
\\& \nabla \cdot u=0,
}
in the sense of distributions.  For our purpose, \eqref{eq.NSE} is applied on $\R^3\times (0,T)$ where $0<T\leq \I$ and $u$ evolves from a prescribed, divergence-free initial data $u_0\colon \R^3\to \R^3$.

When working with weak solutions to the Navier-Stokes equations, it is often useful to have an explicit formula for the pressure in terms of the velocity field. If $u\in L^p$ for some $p<\I$, then the Riesz transforms are often used for this.  If $u$ is not decaying as $x\to \I$, e.g., if $u\in L^\I$, then this is less straightforward. One approach (e.g.~\cite{KiSe,JiaSverak-minimal,JiaSverak,KwTs,LR2,BT8}) is to seek a \emph{local pressure expansion} based on a formula of Fefferman~\cite{Feff}.\footnote{An alternative approach is to use Wolf's local pressure formula---see \cite{Wolf,ChEtAl,Kwon}.}
This  formula cannot hold for every non-decaying solution, as evidenced by the existence of  superfluous (also called parasitic)  solutions for which the formula is false.
In this paper, we show that non-decaying solutions in a general class including the superfluous solutions can be represented in terms of non-decaying solutions which have a local pressure expansion. We also identify a new sufficient condition for a non-decaying solution to satisfy the local pressure expansion.

Let  
\EQ{\label{Gij.def}
G_{ij}f=R_i R_j f= - \frac 13 \de_{ij} f(x) + \pv\int K_{ij}(x-y) f(y)dy,
}
with 
\[  K_{ij}(y) =
\pd_i \pd_j \frac1{4\pi |y|} = \frac {-\de_{ij}|y|^2 + 3 y_i y_j}{4\pi |y|^5}.
\]
For fixed
$x_0\in \R^3$ and $R>0$, denote $B=B_R(x_0)$,
and let $\th(x)$ be a cut-off function which equals $1$ on $B_{2R}(0)$ and vanishes off of~$B_{4R}(0)$.  Also, let 
\[
K_{ij}^{2R}(x) = K_{ij}(x)(1-\theta (x))
%\begin{cases}
%K_{ij}(x) & \text{ if } x\notin  {B_{2R}(0)}
%\\ 0 & \text{ if } x\in  {B_{2R}(0)}
%\end{cases}.
\]
and  
\EQ{\label{GijB.def}
G_{ij}^B f(x) =   - \frac 13 \de_{ij} f(x) +\lim_{\e\to 0} \int_{ {|x-y|>\e}} (K_{ij}(x-y)-K_{ij}^{2R} ( {x_0}-y)  )  f(y)\,dy. 
}
Note that, for $x\in B$, 
\begin{align} 
G_{ij}^B(f_{ij})(x)&= -\frac 1 3 \de_{ij}f_{ij} (x)+ \pv \int (K_{ij}(x-y)-K_{ij}^{2R}(x_0-y))f_{ij}(y)\,dy \nonumber
\\&
=  -\frac 1 3 \de_{ij}f_{ij}(x) +\pv \int K_{ij}(x-y) f_{ij}(y) \th(x_0-y)\,dy \nonumber
\\&\quad+ \int_{|x_0-y|\geq 2R} (K_{ij}(x-y)-K_{ij}^{2R}(x_0-y))f_{ij}(y)(1-\th(x_0-y)) \,dy \nonumber
\\&= (- \Delta^{-1}\div\div )_{ij}(f_{ij}   \th(x_0-\cdot) ) (x)  \nonumber
\\&\quad+\int  (K_{ij}(x-y)-K_{ij}(x_0-y))  (1- \th(x_0-y))f_{ij}(y) \,dy .\nonumber
\label{eq.pressureexpansion}
\end{align}
Unlike \eqref{Gij.def}, which requires some decay, the principle-value integral in \eqref{GijB.def} converges a.e.~if $f_{ij} \in L^\I(\R^n)$ due to the extra decay of the kernel in the far-field.  

Denote by $ L^2_\uloc(0,T)$ the class of vector fields $u\colon \R^3\times (0,T)\to \R^3$ for which 
\[
\esssup_{0<t<T} \|u(t)\|_{L^2_\uloc}<\I.
\]

The next definition describes a local pressure expansion. The assumptions included appear necessary to ensure $p\in L^1_\loc(\R^3\times (0,T))$.

\begin{definition}\label{def.local.pressure}
Assume that $u\in L^2_\uloc(0,T)\cap L^q_\loc(\R^3\times (0,T) )$, for some
$T>0$ and $q>2$, is a distributional solution to \eqref{eq.NSE}, where $p$ is the associated pressure.  We say that $p$ satisfies the local pressure expansion
if, for every $R>0$, $t\in (0,T)$, and $x_0\in \R^3$, there exists a constant $c_{x_0,R}(t)$ so that, for all $x\in B_R(x_0)$,
\[
p(x,t)= G_{ij}^{B_R(x_0)} (u_iu_j (t))(x) +c_{x_0,R}(t)
\]
in $L^{q/2}_\loc(  \R^3\times (0,T)  )$.
\end{definition}

If instead we worked with $u\in L^2_\uloc(0,T)$ without assuming higher integrability, then the local part of the local pressure expansion may not be defined in~$L^1_\loc$.  That is, if $u\in L^2$, then $(-\Delta^{-1}\div \div)_{ij}(u_iu_j \th)$ is not necessarily in $L^1_\loc$ because $u_iu_j\th\in L^1$ and the Riesz transforms are not bounded on~$L^1$.
This necessitates a weaker formulation of the local pressure expansion.
In what follows, $\th_R(x)=\Theta (x/R)$ where $\Theta\in C_c^\I$ is a fixed function  satisfying $\Theta = 1$ on $B_2(0)$ and $\supp\Theta \subset B_4(0)$. It follows that $\th_R =1$ on $B_{2R}(0)$ and $\supp \th_R \subset B_{4R}(0)$.

\begin{definition}\label{def.local.pressure2}
Assume that $u\in L^2_\uloc(0,T), $ for some $T>0$, is a distributional solution to \eqref{eq.NSE}, with $p\in \mathcal D'$ as the associated pressure.  We say that $  p$ satisfies the \textbf{distributional} local pressure expansion  
if, for every $R>0$ and $x_0\in \R^3$, there exists a spatially constant function of time $c_{x_0,R}(t)\in L^1(0,T)$ so that for every $\psi\in \mathcal D(B_R(x_0)\times (0,T))$, 
\EQ{\label{eq.press.distribution}
\int_0^T \langle p(x,t) - c_{x_0,R}(t),   \psi \rangle \,dt  &=\int_0^T \langle p_\near ,\psi\rangle \,dt +\int_0^T \langle p_\far ,\psi\rangle \,dt,
}
where 
\EQ{\notag
&\int_0^T \langle p_\near ,\psi\rangle \,dt
 :=\int_0^T\int u_i u_j \th_R(x_0-\cdot) (-\Delta^{-1} \div \div)_{ij} (\nb\cdot \psi)\,dx \,dt,
\\& \int_0^T \langle p_\far ,\psi\rangle \,dt:=  \int_0^T\int \int (K_{ij}(x-y) - K_{ij}(x_0-y))
\\&\indeq\indeq\indeq\indeq\indeq\indeq\indeq\indeq\indeq\indeq\indeq\indeq\indeq\indeq\indeq\times
(1-\th_R (x_0-y)) u_i u_j \,dy \nb \cdot \psi \,dx\,dt.
}
\end{definition}

The bracket notation $\langle T,f\rangle$ denotes the action of a distribution $T$ on a test function~$f$. We will use the same notation for distributions in $\mathcal D'(\R^3\times (0,T))$, $\mathcal D'(\R^3)$, and $\mathcal D'(0,T)$ and note that the meaning will be clear based on context.   We write $(-\Delta^{-1} \div \div)_{ij}=R_iR_j$, and we will use these notations interchangeably. 
In Section~\ref{sec.pressure}, we explicitly construct a distribution satisfying Definition~\ref{def.local.pressure2}.  The definitions for $p_\near$ and $p_\far$ make sense in $\mathcal D'(\R^3\times (0,T))$ whenever $u\in L^2_\uloc(0,T)$ (see \cite{BT7}). 

Definition~\ref{def.local.pressure2} was introduced in a paper by  Bradshaw and Tsai \cite{BT7}, where it is shown that certain weak solutions are mild if and only if their pressures satisfy the distributional local pressure expansion. This can be viewed as an alternative way to define the pressure when $u\in L^2_\uloc(0,T)$ compared to \cite[Ch. 11]{LR} where the Littlewood-Paley decomposition is used. The benefit is that it clearly extends the classical structure that the pressure has when $u$ is decaying, namely,
\EQ{\label{eq.pressure.classic}
p = R_i R_j(u_i u_j).
}
When $u$ is in $L^r(\R^3\times (0,T))_\loc$ for some $r>2$, the distributional local pressure expansion  agrees with the local pressure expansion in~$\mathcal D'$. When $u\in L^r(\R^3\times (0,T))$ for some $r>2$, then we furthermore have agreement with~\eqref{eq.pressure.classic}.

We will consider several classes of solutions, the most general of which is defined presently.

\begin{definition}[Weak solution]\label{def.weak.sol}  Assume $u_0\in L^1_\loc$ is divergence-free.
The pair $(u,p)$ is a weak solution to \eqref{eq.NSE} for $u_0$ if: 
\begin{enumerate}
\item $u,p\in \mathcal D'(\R^3\times (0,T))$ and, additionally, $u\in L^2_\uloc(0,T)$,
\item $(u,p)$ solves \eqref{eq.NSE} in $\mathcal D'(\R^3\times (0,T))$,
\item the limit
\[
\int_{0+}^T \langle p,\partial_k \psi \rangle  = \lim_{\e \to 0^+} \int_\e^T   \langle p,\partial_k  \psi \rangle \]         exists   for every $\psi\in C_0^\I( \R^3\times [0,T)  )$,
\item for each $k=1,2,3$, we have  
\EQ{\label{eq.weakForm}
\int_0^T\int  u_k(\partial_t \psi_k +\Delta \psi_k +u_j\partial_j \psi_k) \,dx\,dt+\int_{0+}^T \langle p,\partial_k \psi_k \rangle \,dt = - \int  u_{0k}\psi_k(\cdot,0) \,dx.
}
\end{enumerate}
\end{definition}

The last two items can be viewed a statement on the convergence of the solution to the initial data. Indeed, as will be shown in Section~3, they imply $u(t)\to u_0$ in $\mathcal D'(\R^3)$ as $t\to 0^+$. They are included because they are preserved under the transgalilean transformation that is at the heart of this paper.

Our first theorem states that any weak solution can be obtained by applying a transformation to a weak solution satisfying the distributional local pressure expansion.

\begin{theorem}[Structure of weak solutions]\label{thrm.structure}
Assume  $u_0\in L^1_\loc$ is divergence-free and $(u,p)$
is a weak  solution in the sense of Definition~\ref{def.weak.sol}.
Then, there exists a  weak  solution  $(\td u,\td p)$ so that $\td p $ satisfies the distributional local pressure expansion for  $\td u$ and
\[
\td u(x,t) = u(x-\Phi(t),t) +\phi(t),
\]
where 
\[
\phi(t)\in L^\I(0,T);\quad \lim_{t\to 0^+} \phi(t)=0,
\]
and 
\[
\Phi(t)=\int_0^t \phi(s)\,ds.
\]
%Furthermore, $\td u$ satisfies
%\[
%\td u(x,t)= e^{t\Delta} u_0 + \int_0^t e^{(t-s)\Delta} \mathbb P \cdot \nb (\td u\otimes \td u) \,ds,
%\]  
%where $\mathbb P = R_iR_j +\delta_{ij}$ is the  Leray projection operator.
\end{theorem}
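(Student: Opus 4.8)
The plan is to show that the sole obstruction to the distributional local pressure expansion is an affine-in-$x$ term in the pressure, and that its linear part is removed by a time-dependent Galilean boost whose velocity is a primitive of that linear coefficient. Introduce the canonical pressure $p^{(0)}:=p_\near+p_\far$ associated to $u$ as in Definition~\ref{def.local.pressure2}; by construction $-\Delta p^{(0)}=\partial_i\partial_j(u_iu_j)$ in $\mathcal D'$. Taking the divergence of the momentum equation in \eqref{eq.NSE} and using $\nabla\cdot u=0$ shows that $p$ solves the same Poisson equation, so $\pi:=p-p^{(0)}$ is harmonic in $x$ for a.e.\ $t$, hence smooth. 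The far-field estimate $|K_{ij}(x-y)-K_{ij}(x_0-y)|\lec |x-x_0|\,|x_0-y|^{-4}$ together with $u\in L^2_\uloc(0,T)$ gives $p^{(0)}=O(|x|)$, while the momentum equation controls $\nabla p$ in a uniformly-local negative-order space; consequently $\pi$ has at most linear growth, and a Liouville argument forces $\pi(x,t)=c(t)+d(t)\cdot x$ to be affine. The coefficient $d(t)$ is exactly the obstruction, since $\tilde p$ will satisfy the expansion precisely when $\tilde p-\tilde p^{(0)}$ is spatially constant.

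To define the boost, test \eqref{eq.weakForm} with $\psi_k=\chi(x)\eta(t)$, where $\int\chi\,dx=1$; using $\langle\pi,\partial_k\chi\rangle=-d_k(t)$ this identifies $d_k(t)=\tfrac{d}{dt}\!\int u_k(t)\chi\,dx+g_k(t)$ in $\mathcal D'(0,T)$, with $g_k\in L^\I(0,T)$ because $u\in L^\I_tL^2_\uloc$ and $p^{(0)}$ pairs boundedly with $\partial_k\chi$. Setting $\phi(t):=\int_0^t d(s)\,ds$ and $\Phi(t):=\int_0^t\phi(s)\,ds$ yields $\phi_k(t)=\int u_k(t)\chi\,dx-\int u_{0k}\chi\,dx+\int_0^t g_k$, whence $\phi\in L^\I(0,T)$ and, using $u(t)\to u_0$ in $\mathcal D'$ (Definition~\ref{def.weak.sol}(3)--(4)), $\phi(t)\to0$ as $t\to0^+$. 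I then set
\[
\tilde u(x,t)=u(x-\Phi(t),t)+\phi(t),\qquad \tilde p(x,t)=p(x-\Phi(t),t)-\dot\phi(t)\cdot x,
\]
the second identity understood distributionally in $t$, so that $\dot\phi=d$ is paired as above.

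It remains to check that $(\tilde u,\tilde p)$ is a weak solution satisfying the expansion. Translation preserves the $L^2_\uloc$ norm and $\phi\in L^\I$, so $\tilde u\in L^2_\uloc(0,T)$; a direct computation using $\dot\Phi=\phi$ shows $(\tilde u,\tilde p)$ solves \eqref{eq.NSE} in $\mathcal D'$, the gradient of the extra pressure $-\dot\phi\cdot x$ cancelling the term $\dot\phi$ generated by the boost. Since $\Phi(0)=\phi(0)=0$, transforming test functions by $\psi(x,t)\mapsto\psi(x+\Phi(t),t)$ shows that Definition~\ref{def.weak.sol}(3)--(4) and the initial datum are preserved, so $\tilde u_0=u_0$. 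Finally, $\nabla\cdot u=0$ gives $R_iR_j(\phi_iu_j)=0$, and $G^{B}_{ij}$ applied to the constant source $\phi_i\phi_j$ is spatially constant, so the canonical pressure transforms as $\tilde p^{(0)}=p^{(0)}(x-\Phi(t),t)+e(t)$ with $e(t)$ spatially constant. Combining this with $\pi(x-\Phi(t),t)=c(t)-d(t)\cdot\Phi(t)+d(t)\cdot x$ and the choice $\dot\phi=d$, the linear-in-$x$ terms cancel and $\tilde p-\tilde p^{(0)}$ is spatially constant; that is, $\tilde p$ satisfies the distributional local pressure expansion for $\tilde u$.

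The main obstacle is the affine-structure step: making the Liouville argument rigorous requires showing that $\pi$ genuinely grows at most linearly in the purely distributional, uniformly-local framework of Definition~\ref{def.weak.sol}, where no gradient bound on $u$ is assumed. A secondary difficulty is the distributional bookkeeping for $\tilde p$, since $\dot\phi$ is only a distribution in time; this is handled by the same identity $d_k=\tfrac{d}{dt}\!\int u_k\chi\,dx+g_k$ that yields the regularity of $\phi$, together with the verification that the cross term $\phi_iu_j$ contributes nothing to the canonical pressure.
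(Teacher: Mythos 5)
Your architecture is the same as the paper's: split $p=\bar p+\pi$ with $\bar p$ the canonical distributional expansion, show $\pi$ is affine in $x$, extract the boost velocity by testing the weak form \eqref{eq.weakForm} against a fixed bump function (your identity $d_k=\tfrac{d}{dt}\int u_k\chi\,dx+g_k$, integrated in time, is precisely the paper's defining formula for $\phi_k$), translate, and kill the cross terms in the expansion for $\td u$ using $\nabla\cdot u=0$. But the two steps you defer are not bookkeeping; they are exactly the paper's two key lemmas, and as sketched they contain genuine gaps.

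First, your assertion that ``$\nabla\cdot u=0$ gives $R_iR_j(\phi_iu_j)=0$'' is not even well posed in this class: $\phi_iu_j$ only lies in $L^\I_tL^2_\uloc$, and $R_iR_j$ does not act on $L^2_\uloc$; only the modified operator $G^{B}_{ij}$ (or its distributional version) is defined, and the formal manipulation $\phi_iR_iR_ju_j=\phi_i\partial_i\partial_j(-\Delta)^{-1}u_j=0$ is unavailable because nothing decays. This is the entire content of the paper's Lemma~\ref{lemma.zero}: one mollifies and cuts off $u$, restores the divergence-free condition with a Bogovskii correction (Lemma~\ref{lemma.bogovskii}) supported in a far-away shell, applies the classical Riesz-transform identities to the compactly supported, divergence-free approximation, and then shows the correction's contribution to both near and far parts vanishes in the limit via the far-field kernel decay. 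Without this argument, your claim $\td p^{(0)}=p^{(0)}(\cdot-\Phi(t),t)+e(t)$ (the paper's Lemma~\ref{lemma.local.pressure.expansion}) is unproven and the theorem does not follow. Second, the Liouville step, which you flag yourself, cannot be run through pointwise growth bounds: Definition~\ref{def.weak.sol} gives no growth control on $p$ whatsoever ($p$ is a bare distribution), and a negative-order uniformly local bound on $\nabla p$ does not yield linear growth of $\pi$ unless one already knows $\pi$ is a function. The argument that works (and is what lies behind the paper's citation of \cite{Kukavica}) is distributional: after pairing in time, each component of $\nabla\pi$ is a harmonic distribution equal to a finite sum of derivatives of uniformly locally integrable functions; convolving with a radial mollifier both reproduces $\nabla\pi$ (mean value property for harmonic distributions) and exhibits it as a bounded harmonic function, hence constant in $x$ by Liouville. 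This gives $\partial_l\partial_k\pi=0$ directly, with no growth estimate on $\pi$ needed.
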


This result builds upon an idea in~\cite{Kukavica}, where a similar representation is given for the large class of bounded solutions in terms of the smaller class of mild solutions. In \cite{KuVi},  this problem  is examined for a class of weak solutions inspired by the Koch-Tataru space.  All of these solutions are bounded at positive times. In contrast, Theorem~\ref{thrm.structure} allows for possibly singular solutions. In \cite{Kukavica} and \cite{KuVi}, the fact that the solutions are mild is emphasized and the pressure formula which comes from \cite{Feff} is dealt with implicitly. The results in \cite{BT7} show that these notions are essentially equivalent, even under the weakened hypotheses of Theorem~\ref{thrm.structure} (this was known in other contexts prior to \cite{BT7}, see e.g.~\cite{GIM} for bounded solutions and  \cite{LR} for a Littlewood-Paley based pressure formula).

If $(u,p)$ is a solution as in the statement,
then it is considered a genuine solution of the Navier–Stokes equations if 
$p$ satisfies the distributional local pressure expansion or, equivalently, is mild; otherwise, the solution is considered superfluous.

\bigskip 
Intuitively, if a solution exhibits any decay, then it cannot be a perturbation by a constant of another solution. Therefore, it should satisfy the distributional local pressure expansion. This can be made rigorous because $\phi(t)$ can be expressed in terms of~$u$. A consequence of this is the next theorem, which gives a sufficient condition for the local pressure expansion.

\begin{theorem}[Sufficient conditions for the local pressure expansion]\label{thrm.sufficient}Assume $u_0\in L^1_\loc$, which is divergence-free, and $u\in L^2_\loc(\R^3\times [0,T])$ satisfy
\EQ{
\lim_{R\to \I} \frac 1 {R^3} \int_{B_R(0)} |u_0|\,dx=0,
}
and 
\EQ{\label{cond.decay}
\lim_{R\to \I} \frac 1 {R^3} \int_0^T\int_{B_R(0)} |u|^2\,dx\,dt = 0.
}
If additionally the pair $(u,p)$ is a weak solution, then $p$ satisfies the distributional local pressure expansion.
\end{theorem}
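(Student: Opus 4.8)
The plan is to deduce this statement as a corollary of Theorem~\ref{thrm.structure}, by showing that the two decay hypotheses force the drift $\phi$ to vanish. Applying Theorem~\ref{thrm.structure} to the weak solution $(u,p)$ produces a weak solution $(\td u,\td p)$ with $\td p$ satisfying the distributional local pressure expansion and
\[
\td u(x,t)=u(x-\Phi(t),t)+\phi(t),\qquad \Phi(t)=\int_0^t\phi(s)\,ds,
\]
where $\phi\in L^\I(0,T)$ and $\phi(t)\to 0$ as $t\to 0^+$. If I can show $\phi\equiv 0$, then $\Phi\equiv 0$, so $\td u=u$ and $\td p=p$, and the distributional local pressure expansion for $\td p$ is precisely the desired conclusion for $p$. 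Thus the entire theorem reduces to proving $\phi\equiv 0$.

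To recover $\phi$ from $u$, as anticipated by the remark preceding the theorem, I would argue that $\phi(t)$ is a large-scale spatial average of $u$. First, letting $t\to 0^+$ in the transformation and using $\Phi(t),\phi(t)\to 0$ shows that $\td u(\cdot,t)\to u_0$ in $\mathcal D'$, so $\td u$ is a genuine solution with the same initial data $u_0$. A genuine (equivalently, mild) solution preserves the large-ball average of its data, which vanishes by the first hypothesis; hence
\[
\lim_{R\to\I}\frac{1}{|B_R(0)|}\int_{B_R(0)}\td u(x,t)\,dx=0 \quad\text{for a.e. } t.
\]
Substituting $\td u=u(\cdot-\Phi(t),t)+\phi(t)$ and using that $\Phi(t)$ is bounded (so the translation affects only a set of relative measure $O(1/R)$, which is negligible after dividing by $|B_R(0)|$ and invoking \eqref{cond.decay}) yields the recovery formula
\[
\phi(t)=-\lim_{R\to\I}\frac{1}{|B_R(0)|}\int_{B_R(0)}u(x,t)\,dx .
\]

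With this formula in hand the conclusion is quantitative. By Cauchy--Schwarz,
\[
|\phi(t)|^2\le \liminf_{R\to\I}\frac{1}{|B_R(0)|}\int_{B_R(0)}|u(x,t)|^2\,dx,
\]
so integrating in time and applying Fatou's lemma gives
\[
\int_0^T|\phi(t)|^2\,dt\le \liminf_{R\to\I}\frac{1}{|B_R(0)|}\int_0^T\int_{B_R(0)}|u|^2\,dx\,dt,
\]
which is a fixed multiple of $\frac{1}{R^3}\int_0^T\int_{B_R(0)}|u|^2\,dx\,dt$ and hence equals $0$ by \eqref{cond.decay}. Therefore $\phi=0$ for a.e.\ $t$, and since $\phi\in L^\I(0,T)$ this yields $\Phi\equiv 0$, closing the argument.

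The main obstacle is the recovery formula, i.e.\ rigorously justifying that the genuine solution $\td u$ has vanishing large-ball average. I would establish this from the mildness of $\td u$ (equivalent to its distributional local pressure expansion, via \cite{BT7}): writing $\td u=e^{t\Delta}u_0+$ (Duhamel term), the heat flow preserves the vanishing large-ball average of $u_0$ guaranteed by the first hypothesis, while the Duhamel integrand is a Leray-projected divergence whose large-ball average is a boundary flux that vanishes after normalization by $|B_R(0)|$. The delicate points to track are (i) a uniform-in-$t$ bound on the translation $\Phi(t)$ so that the shift is asymptotically invisible under the averaging, (ii) controlling the spatial averages through \eqref{cond.decay} and Cauchy--Schwarz rather than asserting that the pointwise-in-$t$ limits exist, and (iii) the interchange of the time integral with the $\liminf$ in $R$, for which Fatou's lemma as used above is exactly what is needed.
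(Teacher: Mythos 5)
Your reduction of the theorem to showing $\phi\equiv 0$ is sound and matches the intuition the paper itself states ("$\phi(t)$ can be expressed in terms of $u$"), but the mechanism you propose for proving $\phi\equiv 0$ has a genuine gap, in fact two. First, you pass from "$\td p$ satisfies the distributional local pressure expansion" to "$\td u$ is mild" and then manipulate the representation $\td u = e^{t\Delta}u_0 + \text{Duhamel}$ as a pointwise identity. At the generality of this theorem ($u_0\in L^1_\loc$, $u\in L^\I_t L^2_\uloc$) this step is not available: the paper is explicit that mildness follows only "provided the terms in the definition of mild solutions converge," and with only $L^2_\uloc$ control the Duhamel integral need not converge absolutely (near $s=t$ the kernel bound $(|x-y|+\sqrt{t-s})^{-4}$ against $|u|^2\in L^1_\uloc$ gives a non-integrable $(t-s)^{-2}$ singularity). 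Second, even granting mildness, your key claim --- that a mild solution preserves the vanishing large-ball average of its data --- is precisely the hard analytic content, and your sketch of it does not survive scrutiny: the primitive tensor $F$ you need for the boundary-flux argument is built from the Oseen tensor $S$, which decays only like $(|x|+\sqrt{t-s})^{-3}$, so $F$ itself is logarithmically divergent against $|u|^2\in L^1_\uloc$ and is not defined without a renormalization (subtracting $F$ at a reference point), after which the required $o(R)$ bound on spheres still has to be proven. So the proposal, as written, replaces the theorem by two unproven claims of comparable difficulty.

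The paper's proof avoids all of this by a softer and, in retrospect, simpler device that your plan misses: rescaling the averaging function in the definition of the drift. Taking $\be_R(x)=R^{-3}\be(x/R)$ and letting $\phi_R$ be the drift built from $\be_R$, every term in the explicit formula for $\phi_R$ (the $u$ terms, the $u_0$ term, the nonlinear term, and the near- and far-field parts of $\bar p$) is estimated directly from the two decay hypotheses, yielding $\phi_R\to 0$ in $L^1(0,T)$ and $\Phi_R\to 0$ in $L^\I(0,T)$ as $R\to\I$. Consequently $u_R\to u$ and $\nb p_R \to \nb \bar p$ in $\mathcal D'(\R^3\times(0,T))$, and since each pair $(u_R,p_R)$ solves the Navier--Stokes equations, so does the limit $(u,\bar p)$; hence $\nb p = \nb\bar p$, which is the conclusion. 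Note that this argument never fixes a single $\be$ and never invokes mildness or any pointwise solution formula; the decay hypotheses enter only through elementary term-by-term estimates of $\phi_R$. If you want to rescue your approach, the honest route is to prove your "conservation of large-ball averages" directly from the distributional pressure expansion rather than from mildness, but at that point you would essentially be reproving the paper's lemma on $\phi_R$.
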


There are several related sufficient conditions for the pressure in the literature. The most similar  is due to Lemari\'e-Rieusset.  In particular, \cite[Theorem~11.1.ii]{LR} states that if
\[
\lim_{R\to \I} \sup_{x_0\in \R^3} \frac 1 {R^3} \int_0^T\int_{B_R(x_0)} |u|^2\,dx\,dt = 0,
\]
and $u$ is a  ``uniform weak solution'' (as defined in \cite[Ch.~11]{LR}), then $u$ solves
\[
\partial_t u -\Delta u+\mathbb P\cdot\nb (u\otimes u)=0.
\] 
Then, \cite[Theorem~11.1.i]{LR} implies a pressure $P$ can be formulated using the Littlewood-Paley decomposition.  Our condition on the flow is weaker because it is  centered at~$0$.  However, the initial data is not mentioned in \cite[Theorem~11.1.i]{LR} whereas we require some decay.  The details of how to go from Lemari\'e-Rieusset's formula to the local pressure expansion under this generality has not been worked out explicitly. Theorem~\ref{thrm.sufficient} circumvents this issue without using the Littlewood-Paley theory and, together with \cite{BT7}, suggests these approaches are equivalent; see also~\cite{FL2}.

In conjunction with \cite[Theorem 1.5]{BT5}, this gives a sufficient condition for $u$ to be a mild solution provided the terms in the definition of mild solutions converge.

The local pressure expansion has been proven useful for studying a class of solutions introduced by Lemari\'e-Rieusset, and we take some care to apply our result to this context.
The following definition is motivated by those found in~\cite{LR,KiSe,JiaSverak-minimal,JiaSverak}.   Note that, at this point, we do not include an assumption on the structure of the pressure nor on the decay of the solution. 

\begin{definition}[Local Leray solution]\label{def:localLeray} A vector field $u\in L^2_{\loc}(\R^3\times [0,T))$ is a local Leray solution to \eqref{eq.NSE} with divergence-free initial data $u_0\in L^2_{\uloc}(\R^3)$ (denoted $u\in \mathcal N(u_0)$) if:
\begin{enumerate}
\item for some $p\in L^{3/2}_{\loc}(\R^3\times (0,T))$, the pair $(u,p)$ is a distributional solution to \eqref{eq.NSE},
\item for any $R>0$, the function $u$ satisfies
\begin{align}
  \begin{split}
    &\esssup_{0\leq t<R^2\wedge T}\,\sup_{x_0\in \R^3}\, \int_{B_R(x_0 )}\frac 1 2 |u(x,t)|^2\,dx
\\&\indeq
+ \sup_{x_0\in \R^3}\int_0^{R^2\wedge T}\int_{B_R(x_0)} |\nabla u(x,t)|^2\,dx \,dt<\infty,
  \end{split}
\notag
\end{align}
%\in C_0^\I$ which is supported on $B_8$, identically $1$ on $B_4$, and radial, non-increasing.
\item for all compact subsets $K$ of $\R^3$, we have $u(t)\to u_0$ in $L^2(K)$ as $t\to 0^+$,
\item $u$ is suitable in the sense of \cite{CKN}, i.e., for all cylinders $Q$ compactly supported in  $ \R^3\times(0,T )$ and all non-negative $\phi\in C_0^\infty (Q)$, we have  the \emph{local energy inequality}
\EQ{\label{CKN-LEI}
&%\int |u(t)|^2\phi \,dx +
2\iint |\nabla u|^2\phi\,dx\,dt 
\\&\leq %\int |v_0|^2\phi \,dx+ 
\iint |u|^2(\partial_t \phi + \Delta\phi )\,dx\,dt +\iint (|u|^2+2p)(u\cdot \nabla\phi)\,dx\,dt,
}
\item the function $t\mapsto \int u(x,t)\cdot w(x)\,dx$ is continuous on $[0,T)$ for any compactly supported $w\in L^2(\R^3)$.
\end{enumerate}
\end{definition} 

This class is not restrictive enough for some applications because it contains superfluous solutions.
A slightly stronger class of solutions is the following.

\begin{definition}[Local energy solution]
If $u\in \mathcal N(u_0)$ for some divergence-free $u_0\in L^2_\uloc$ and the associated pressure $p$ satisfies the local pressure expansion, 
then  $u$ is a  local energy solution. 
\end{definition}

Our naming convention is consistent with \cite{BT7}, which also includes a summary of the similarities and differences between the various definitions of ``local Leray solutions.''  Above, we assume that $p$ satisfies the local pressure expansion, which is stronger than the local pressure expansion as a distribution.

Given a local Leray solution, it is unclear how the evolution of the local energy relates to the initial data. This issue goes away if the solution is a local energy solution. In this case, Jia and \v Sver\'ak proved essentially the following estimate: ~If $u$ is a local energy solution with initial data $u_0\in L^2_\uloc$ and $r>0$, then
\begin{equation}\label{ineq.apriorilocal}
\esssup_{0\leq t \leq \sigma r^2}\sup_{x_0\in \RR^3} \int_{B_r(x_0)}\frac {|u|^2} 2 \,dx  + \sup_{x_0\in \RR^3}\int_0^{\sigma r^2}\int_{B_r(x_0)} |\nabla u|^2\,dx\,dt \leq
CA_0(r) ,
\end{equation}
where
\[
A_0(r)=rN^0_r= \sup_{x_0\in \R^3} \int_{B_r(x_0)} |u_0|^2 \,dx,
\] 
and
\begin{equation}\label{def.sigma}
\si=\sigma(r) =c_0\, \min\big\{(N^0_r)^{-2} , 1  \big\},
\end{equation}
for a small universal constant $c_0>0$. This estimate has been used critically in \cite{JiaSverak,KMT,BT5} and extended to a weighted setting in~\cite{BK1,BKO,BKT,BCT,FL1} which, interestingly, does not exactly overlap with $L^2_\uloc$.  Given the usefulness of this bound and its dependence on the local pressure expansion, it is important to have sufficient conditions under which  a local Leray solution is a local energy solution.  A sufficient condition for this is given in \cite{JiaSverak-minimal}, where it is stated that if $u$ is a local Leray solution and 
\[
\lim_{|x_0|\to \I} \int_0^{R^2}\int_{B_R(x_0)} |u|^2\,dx\,dt =0,
\]
then $u$ is a local energy solution. This has been proven explicitly in \cite{KMT} using ideas from~\cite{MaMiPr}. As a consequence of Theorem~\ref{thrm.sufficient}, we have an alternative  sufficient condition. 
\begin{corollary}\label{thrm.sufficient2} 
If $u_0\in L^2_\uloc$ is divergence-free, $u$ is a local Leray solution on $\R^3\times (0,T)$ with data $u_0$, and
\[
\lim_{R\to \I} \frac 1 {R^3} \bigg( \int_{B_R(0)}|u_0|\,dx + \int_0^T\int_{B_R(0)}|u(x,t)|^2\,dx \,dt     \bigg)=0,
\]
then $p$ satisfies the local pressure expansion. Moreover, $u$ is a local energy solution and \eqref{ineq.apriorilocal} holds. Additionally, from \cite[Theorem 1.5]{BT5}, it follows that $u$ is a mild solution.
\end{corollary}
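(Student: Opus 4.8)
The plan is to deduce the corollary from Theorem~\ref{thrm.sufficient} after checking that a local Leray solution falls within the scope of Definition~\ref{def.weak.sol}, and then to bootstrap the resulting distributional local pressure expansion to the genuine local pressure expansion using the higher integrability enjoyed by local Leray solutions.

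First I would verify the hypotheses of Theorem~\ref{thrm.sufficient}. Since $u_0\in L^2_\uloc\subset L^1_\loc$, and since the two summands in the decay assumption of the corollary are nonnegative, that assumption forces each to vanish separately, giving
\[
\lim_{R\to\I}\frac1{R^3}\int_{B_R(0)}|u_0|\,dx=0,\qquad \lim_{R\to\I}\frac1{R^3}\int_0^T\int_{B_R(0)}|u|^2\,dx\,dt=0,
\]
which is exactly the pair of hypotheses, including \eqref{cond.decay}. The local energy bound in item~(2) of Definition~\ref{def:localLeray} places $u$ in $L^2_\uloc(0,T)\subset L^2_\loc(\R^3\times[0,T])$. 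The remaining task is to confirm that $(u,p)$ is a weak solution in the sense of Definition~\ref{def.weak.sol}: items (1) and (2) there are immediate from items (1) and (2) of Definition~\ref{def:localLeray} (in particular $p\in L^{3/2}_\loc\subset\mathcal D'$), while items (3) and (4)---the existence of the time-regularized pressure limit and the weak formulation carrying the initial data---I would extract from the strong convergence $u(t)\to u_0$ in $L^2_\loc$ of item~(3) of Definition~\ref{def:localLeray}. Concretely, testing the momentum equation on $(\e,T)$ against $\psi\in C_0^\I(\R^3\times[0,T))$ and integrating by parts in time expresses $\int_\e^T\langle p,\partial_k\psi_k\rangle$ through the remaining terms, each of which converges as $\e\to0^+$ by $u\in L^2_\loc$ together with the strong convergence to $u_0$; passing to the limit yields both item~(3) and the identity \eqref{eq.weakForm} of item~(4).

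Granting this, Theorem~\ref{thrm.sufficient} applies and gives that $p$ satisfies the distributional local pressure expansion. To upgrade this to the local pressure expansion of Definition~\ref{def.local.pressure}, I would record that a local Leray solution has the standard higher integrability $u\in L^{10/3}_\loc(\R^3\times(0,T))$, obtained by interpolating the uniform bound on $\esssup_t\int_{B_R}|u|^2$ against $\int\!\int_{B_R}|\nabla u|^2$ via the Ladyzhenskaya/Gagliardo--Nirenberg inequality. Since $10/3>2$, the agreement noted after \eqref{eq.pressure.classic}---that the distributional expansion coincides with the local pressure expansion whenever $u\in L^r_\loc$ for some $r>2$---converts the distributional statement into the pointwise (in $L^{q/2}_\loc$) formula of Definition~\ref{def.local.pressure}.

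Finally, with $u\in\mathcal{N}(u_0)$ and $p$ satisfying the local pressure expansion, $u$ is by definition a local energy solution, so the Jia--\v Sver\'ak a~priori bound \eqref{ineq.apriorilocal} holds, and \cite[Theorem~1.5]{BT5} then yields that $u$ is mild. I expect the main obstacle to be the identification of a local Leray solution as a weak solution in the sense of Definition~\ref{def.weak.sol}, that is, producing items (3) and (4): the delicate point is the behavior of the pressure near $t=0$, where only $p\in L^{3/2}_\loc(\R^3\times(0,T))$ is available with no integrability uniform down to the initial time, so the argument must route the existence of the pressure limit entirely through the better-behaved velocity terms via the equation rather than estimating $p$ directly.
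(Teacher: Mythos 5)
Your proposal is correct and takes essentially the same route as the paper: invoke Theorem~\ref{thrm.sufficient} to get the distributional local pressure expansion, then use the parabolic interpolation integrability of local Leray solutions (you use $u\in L^{10/3}_\loc$, the paper uses $u\in L^{8/3}(0,T;L^4(B))$, to the same effect) to move $R_iR_j$ off the test function and upgrade to the local pressure expansion of Definition~\ref{def.local.pressure}, after which the local-energy-solution property, \eqref{ineq.apriorilocal}, and mildness follow as you say. Your explicit verification that a local Leray solution satisfies items (3) and (4) of Definition~\ref{def.weak.sol}---routing the existence of $\int_{0+}^T\langle p,\partial_k\psi_k\rangle\,dt$ through the equation and the strong $L^2_\loc$ convergence to the data---is left implicit in the paper's proof, so it is a useful supplement rather than a genuinely different argument.
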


This paper is organized as follows. In Section~\ref{sec.pressure}, we recall useful facts about the local pressure expansion from \cite{BT7} and prove several lemmas. Then, Sections \ref{sec.proof1} and \ref{sec.proof2} contain the proofs of Theorems \ref{thrm.structure} and \ref{thrm.sufficient}, respectively.

\section{The local pressure expansion} \label{sec.pressure}

To show that Definition~\ref{def.local.pressure2} is meaningful, we construct a distribution $p$ satisfying Definition~\ref{def.local.pressure2}. The details of this are contained in \cite{BT7}, and we only recall the main ideas. 

Let
$u\in L^2_\uloc(0,T) $, for some $T>0$, be given.
Fix $R>0$ and $x_0\in \R^3$, and let $B=B_R(x_0)$.
Consider the mapping
\EQN{ 
\psi \mapsto &\int_0^T \int u_i (x,t)u_j(x,t) \th_R(x_0-x) R_i R_j    \psi  (x,t)\,dx\,dt
\\&+ \int_0^T\int \int (K_{ij}(x-y) - K_{ij}(x_0-y))
\\&\indeq\indeq\indeq\indeq\indeq\indeq\indeq\indeq\indeq\indeq\indeq\times
(1-\th_R (x_0-y))( u_i u_j)(y,t) \,dy\,   \psi(x,t) \,dx\,dt
\\&=: \int_0^T \langle   \bar p_\near^B,\psi\rangle \,dt +\int_0^T \langle   \bar p_\far^B,\psi \rangle\,dt,
}
for $\psi\in {\mathcal D}(B\times (0,T))$,
where $\th_R$ is defined   in Section~1.
Then $ \bar p^B := \bar p_\near^B + \bar p_\far^B \in \mathcal D'(B\times (0,T))$ (the inclusion is proven in~\cite{BT7}).

We extend this to define a distribution in $\mathcal D'(\R^3\times (0,T))$ using the following recursive procedure:  
\EQ{\label{def.pressure.dist}
%\langle \bar p ,\psi\rangle  =
\begin{cases}\int_0^T \langle \bar p (x),\psi\rangle \,dt:=\int_0^T \langle \bar p^{B_1(0)} , \psi\rangle \,dt  & \text{ if }\psi \in \mathcal D( B_1(0)\times (0,T))
\\ \int_0^T\langle \bar p (x),\psi\rangle\,dt:=\int_0^T \langle \bar p^{B_n(0)} , \psi\rangle\,dt +\int_0^T \langle \sum_{k=2}^n \bar c_k , \psi\rangle \,dt  & \text{ if }n\geq 2\text{ and }\psi \in \mathcal D( Q_n) ,
\end{cases}
}
where $Q_n=B_{n}(0)\times (0,T)$ and
\[
\bar c_{k}(t)=-\int K_{ij}(x-y)  (\th_{k} (-y)-\th_{k-1} (-y))( u_iu_j)(y,t) \,dy.
\]
Given $\bar p$ as above, it is possible to show $ \bar p$ satisfies Definition~\ref{def.local.pressure2} (again, see \cite{BT7}).  Generally speaking, if we say ``let $\bar p$ satisfy the distributional local pressure expansion for a given $u$,'' then we mean $\bar p$ is constructed as above. We note that we can apply this construction to any matrix $f$ with entries $f_{ij}$ where the role of $u_iu_j$ is played by~$f_{ij}$.

While the above gives a distributional local pressure expansion, when $u$ has better local integrability,  a similar construction  gives a function that satisfies the local pressure expansion   in the sense of Definition~\ref{def.local.pressure}. The details of the construction are contained in \cite{BT7}; see also \cite{KiSe, KMT}.

In the remainder of this section we establish several properties of the distributional local pressure expansion. 
The first of these concerns the distributional local pressure expansion applied to matrices of the form $(c_iu_j)_{i,j}$ with $c_i$ time-dependent functions which are constant in the space variable, $u(x,t)$ is a vector field in $L_\uloc^2(0,T)$ and $u$ is divergence-free. In particular, the gradient of the image of such a matrix by the local pressure expansion is zero.

\begin{lemma}\label{lemma.zero} %bad name. 
Assume $u\in L^2_\uloc(0,T)$ is divergence-free.  Let $\pi$ be given by \eqref{def.pressure.dist}, where $u_iu_j$ is replaced by $f_{ij}= c_i \, u_j$ and $c_i=c_i(t)$ are time dependent, spatially constant, bounded functions.  Then, $\nb \pi = 0$ in $\mathcal D'(\R^3\times (0,T))$.   
\end{lemma}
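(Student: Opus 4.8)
The plan is to reduce the global statement to a single ball and then to show, by an explicit computation that uses $\div u=0$ twice, that on each ball the distribution $\bar p^{B}$ (with $u_iu_j$ replaced by $c_iu_j$) is in fact a function that is constant in space. First I would use the recursive definition \eqref{def.pressure.dist}: on $Q_n=B_n(0)\times(0,T)$ we have $\pi=\bar p^{B_n(0)}+\sum_{k=2}^n \bar c_k$, and each $\bar c_k=\bar c_k(t)$ is independent of $x$, so $\nb \bar c_k=0$ in $\mathcal D'$. Since every $\psi\in\mathcal D(\R^3\times(0,T))$ is supported in some $Q_n$, it therefore suffices to prove $\nb\bar p^{B}=0$ in $\mathcal D'(B\times(0,T))$ for an arbitrary ball $B=B_R(x_0)$.

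Next I would observe that, for the special matrix $f_{ij}=c_iu_j$, the object $\bar p^{B}$ is a genuine function. Indeed, $c_iu_j\th_R(x_0-\cdot)$ is compactly supported and, since $u\in L^2_\uloc(0,T)$, it lies in $L^2$; hence, by self-adjointness of $R_iR_j$, the near part is the honest $L^2$ function $\bar p^{B}_\near=c_iR_iR_j\big(u_j\th_R(x_0-\cdot)\big)$. For $x\in B$ the far integral converges absolutely because $|K_{ij}(x-y)-K_{ij}(x_0-y)|\lec |x-x_0|\,|x_0-y|^{-4}$ while $\int |y|^{-4}|u(y)|\,dy<\I$ by the $L^2_\uloc$ bound, giving the function $\bar p^{B}_\far(x)=c_i\int (K_{ij}(x-y)-K_{ij}(x_0-y))(1-\th_R(x_0-y))u_j(y)\,dy$. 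Thus it is enough to show $\pd_{x_\ell}\bar p^{B}=0$ on $B$ for each $\ell$. Throughout I would write $\eta=\th_R(x_0-\cdot)$ and introduce the key auxiliary field $g:=u\cdot\nb\eta=\div(u\eta)$, the second equality being exactly where $\div u=0$ enters; note $g\in L^2$ is supported in the annulus $\{2R\le |x_0-\cdot|\le 4R\}$, which is disjoint from $B$, and $\int g=\int\div(u\eta)=0$.

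The core is then a cancellation between $\pd_\ell\bar p^{B}_\near$ and $\pd_\ell \bar p^{B}_\far$. On the near part, summing in $j$ and using $R_j=\pd_j(-\Delta)^{-1/2}$ gives $\sum_j R_j(u_j\eta)=(-\Delta)^{-1/2}\div(u\eta)=(-\Delta)^{-1/2}g$, so that $\bar p^{B}_\near=c_i\pd_i(-\Delta)^{-1}g$ and $\pd_\ell\bar p^{B}_\near=c_i\pd_i\pd_\ell(-\Delta)^{-1}g$; all of these are legitimate $L^2$ identities by a Fourier-multiplier computation, with the vanishing of $\int g$ ensuring that $\pd_i(-\Delta)^{-1}g$ stays in $L^2$ near frequency zero. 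For the far part I would differentiate under the integral sign (the $K_{ij}(x_0-y)$ term is independent of $x$ and drops), write $\pd_{x_\ell}K_{ij}(x-y)=\pd_{x_\ell}\pd_{x_i}\pd_{x_j}\Gamma(x-y)$ with $\pd_{x_j}\Gamma(x-y)=-\pd_{y_j}\Gamma(x-y)$, integrate by parts in $y_j$, and sum in $j$ using $\sum_j\pd_{y_j}[(1-\eta)u_j]=-g$. Since $x\in B$ is disjoint from $\supp g$, the $x$-derivatives pull out cleanly and one obtains $\pd_\ell\bar p^{B}_\far=-c_i\pd_\ell\pd_i(-\Delta)^{-1}g$, whence $\pd_\ell\bar p^{B}=\pd_\ell\bar p^{B}_\near+\pd_\ell\bar p^{B}_\far=0$.

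The one nonroutine point — the main obstacle — is justifying the integration by parts in the far term with no contribution from infinity, since $(1-\eta)u$ does not decay. I would handle this by inserting a cutoff $\chi(\cdot/S)$, so that the extra boundary term carries a factor $\nb\chi(\cdot/S)=O(S^{-1})$ supported where $|y|\sim S$; estimating it by the kernel decay $|\pd^2\Gamma|\lec |x-y|^{-3}$ against $\int_{|y|\sim S}|u|\,dy\lec S^{3}\|u\|_{L^2_\uloc}$ (Cauchy--Schwarz on $\sim S^3$ unit balls) gives a bound of order $S^{-1}$, which vanishes as $S\to\I$. The same $L^2_\uloc$ growth control underlies differentiating under the integral and pulling the $x$-derivatives through the Newtonian potential; everything else is bookkeeping.
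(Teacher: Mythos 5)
Your proof is correct, but it takes a genuinely different route from the paper's. The paper argues by approximation: it mollifies and truncates $u$, restores the divergence-free condition with a Bogovskii corrector $\Psi^\e$ (Lemma~\ref{lemma.bogovskii}), invokes the stability result \cite[Lemma~5.2]{BT7} to get $\nb \pi^\e \to \nb\pi$ in $\mathcal D'$, shows that for each smooth, compactly supported, divergence-free approximant the tested expression vanishes identically (classical Riesz-transform calculus plus one integration by parts using $\div u^\e=0$), and finally checks that the corrector's contribution disappears as $\e\to 0$. You instead prove an exact cancellation with no approximation: after localizing to a ball $B=B_R(x_0)$ via the recursive definition (the constants $\bar c_k$ have zero gradient), you show that on $B$ both the near and far parts of $\nb \bar p^B$ are Newtonian potentials of the single compactly supported $L^2$ function $g=u\cdot\nb\eta=\div(u\eta)=-\div\big((1-\eta)u\big)$, where $\eta=\th_R(x_0-\cdot)$, with opposite signs: $\nb\bar p^B_\near=c_i\nb\pd_i(-\De)^{-1}g$ by a Fourier-multiplier identity (with $\int g=0$ taming frequency zero), while $\nb\bar p^B_\far=-c_i\nb\pd_i(-\De)^{-1}g$ by differentiating under the integral and integrating by parts in $y$, which is clean because $\supp g$ lies in an annulus disjoint from $B$; the two sign conventions are consistent since $\div(u\eta)+\div\big((1-\eta)u\big)=\div u=0$, so the pieces cancel pointwise a.e.\ on $B$. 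What each approach buys: yours is self-contained--it needs neither Bogovskii's lemma nor the nontrivial convergence lemma from \cite{BT7}, and it produces an explicit formula for each piece of the gradient--whereas the paper's approximation scheme does not exploit the special structure $f_{ij}=c_iu_j$ so sharply and is reused essentially verbatim in the proof of Lemma~\ref{lemma.harmonic}, which is presumably why the authors set it up that way. The one step of yours that genuinely requires care is the integration by parts at spatial infinity, since $(1-\eta)u$ does not decay; your cutoff argument with the $O(S^{-1})$ boundary estimate handles it correctly, and the remaining identifications (self-adjointness of $R_iR_j$ on $L^2$, and the agreement of $R_\ell R_i g$ with the absolutely convergent integral $\int K_{\ell i}(\cdot-y)g(y)\,dy$ off $\supp g$) are standard.
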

To prove this, we will need a version of Bogovskii's map from~\cite{Bog}.
\begin{lemma}[The Bogovskii map]
\label{lemma.bogovskii}
Let $\Om$ be a bounded Lipschitz domain in $\R^n$, 
where $2\le n <\infty$. There is a linear map $\Psi$ that maps a scalar $f \in L^q(\Om)$ 
with $\int_\Om f = 0$, $1<q<\infty$, to a vector field $v=\Psi f \in W^{1,q}_0(\Om;\R^n)$
and
\begin{equation*}%\label{S2:eq-th8}
\div v= f, \quad \norm{v}_{W^{1,q}_0( \Om)} \le c(\Om,q) \norm{f}_{L^q( \Om)}.
\end{equation*}
The map $\Psi$ is independent of $q$ for $f \in C_c^\infty( \Om)$. 
\end{lemma}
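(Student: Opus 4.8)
The plan is to follow Bogovskii's explicit construction: first treat a domain star-shaped with respect to a ball, then patch finitely many such pieces together to cover a general bounded Lipschitz domain.

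First I would handle the model case in which $\Om$ is star-shaped with respect to a ball $B=B_\rho(x_0)\subset\Om$. Fix $\omega\in C_c^\infty(B)$ with $\int\omega=1$, and for $f\in C_c^\infty(\Om)$ with $\int_\Om f=0$ define Bogovskii's operator
\[
(\Psi f)(x)=\int_\Om f(y)\,\frac{x-y}{|x-y|^n}\int_{|x-y|}^{\infty}\omega\Big(y+r\,\frac{x-y}{|x-y|}\Big)r^{n-1}\,dr\,dy.
\]
I would then verify three points. First, differentiating under the integral and using both $\int\omega=1$ and $\int_\Om f=0$ yields $\div\Psi f=f$; star-shapedness forces $\Psi f$ to again have compact support in $\Om$, so $\Psi f\in C_c^\infty(\Om)\subset W^{1,q}_0(\Om;\R^n)$. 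Second, rewriting the kernel shows each $\partial_k(\Psi f)_j$ splits as a principal-value singular integral with kernel homogeneous of degree $-n$ in $x-y$ whose mean over spheres vanishes, plus a bounded operator with a weakly singular kernel; the Calderón--Zygmund theorem together with Young's inequality then gives $\norm{\Psi f}_{W^{1,q}_0(\Om)}\le c(\Om,q)\norm{f}_{L^q(\Om)}$ for every $1<q<\I$. Third, since the formula makes no reference to $q$, the same $v=\Psi f$ serves all exponents, giving the claimed independence of $q$ on $C_c^\infty$. Density of the mean-zero smooth functions in the mean-zero subspace of $L^q$ extends $\Psi$ boundedly to all admissible $f$.

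To remove star-shapedness I would use a two-patch lemma and induct over a finite cover. A bounded Lipschitz domain admits a finite open cover $\Om=\bigcup_{i=1}^N U_i$ with each $\Om_i:=\Om\cap U_i$ star-shaped with respect to a ball, a standard consequence of the Lipschitz graph description of $\partial\Om$. The two-patch lemma states: if Bogovskii operators exist on open sets $\Om_1,\Om_2$ with $\Om_1\cap\Om_2$ having nonempty interior, then one exists on $\Om_1\cup\Om_2$. Its proof is a mass transfer: given mean-zero $f$, choose a partition of unity $\eta_1+\eta_2=1$ subordinate to $\{\Om_1,\Om_2\}$, set $m=\int\eta_1 f$, pick $h\in C_c^\infty(\Om_1\cap\Om_2)$ with $\int h=1$, and define $f_1=\eta_1 f-mh$, $f_2=\eta_2 f+mh$; then $\int_{\Om_i}f_i=0$, $f_1+f_2=f$, and $|m|\le\norm{f}_{L^1}\le c\norm{f}_{L^q}$ on the bounded domain, so $\norm{f_i}_{L^q}\le c\norm{f}_{L^q}$. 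Applying the model operators $\Psi_i$ and extending by zero, $v=\Psi_1 f_1+\Psi_2 f_2$ satisfies $\div v=f$ with the desired bound, and linearity and $q$-independence are inherited. Iterating this step across the $N$ patches produces $\Psi$ on all of $\Om$.

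The main obstacle is the second point of the model case: establishing that $\nb\Psi$ has genuine Calderón--Zygmund structure, i.e.\ isolating from the differentiated kernel a principal part that is homogeneous of degree $-n$ and has vanishing mean on spheres, so that the singular integral is bounded on $L^q$ across the full range $1<q<\I$ with a constant depending only on $\Om$ and $q$. This cancellation property is the crux that delivers the quantitative $W^{1,q}_0$ estimate; once it is in hand, the divergence identity, the compact-support claim, and the inductive patching are comparatively routine bookkeeping.
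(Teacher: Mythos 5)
Your proposal is correct, but there is nothing in the paper to measure it against: the authors do not prove Lemma~\ref{lemma.bogovskii} at all. The statement is quoted verbatim from \cite{Tsai-book}, which in turn points to the detailed treatment in \cite{galdi}, and the only substantive addition the paper makes is the remark on how the constant scales, $c(\Om,2)=Cr$, on the shells $\{x: r\leq |x|\leq 2r\}$ used later in the proof of Lemma~\ref{lemma.zero}. What you have written is precisely the canonical Bogovskii--Galdi argument that those references contain: the explicit kernel on a domain star-shaped with respect to a ball, the divergence identity from $\int \omega =1$ and $\int_\Om f=0$, Calder\'on--Zygmund bounds for $\nabla \Psi f$, the decomposition of a bounded Lipschitz domain into finitely many star-shaped pieces, and the mass-transfer gluing $f_1=\eta_1 f-mh$, $f_2=\eta_2 f+mh$, which correctly preserves mean zero, supports, $L^q$ bounds, and $q$-independence; the final density step (correcting a smooth approximant $g_n$ by $(\int g_n)h$ for a fixed mean-one $h$) is also the standard one. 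Two minor points of bookkeeping in your singular-integral step: differentiating Bogovskii's kernel produces, besides the principal-value part and the weakly singular remainder, a local term of the form $c(x)f(x)$ (the analogue of the $-\tfrac13 \de_{ij}f$ term in \eqref{Gij.def}), which is trivially bounded on $L^q$ but should appear in your splitting; and the principal part is a \emph{variable} kernel $K(x,x-y)$, homogeneous of degree $-n$ and mean-zero in the second argument with smooth dependence on $x$, so you need the Calder\'on--Zygmund theorem in its variable-kernel form rather than for a convolution kernel. Neither affects the validity of the outline. If you wanted your write-up to fully serve the paper's purposes, you would additionally record how the constant behaves under dilations (apply the estimate on the unit shell and rescale $v_r(x)=r\,v(x/r)$, $f_r(x)=f(x/r)$), since that scaling, not the general Lipschitz case, is what the proof of Lemma~\ref{lemma.zero} actually invokes.
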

The above statement is taken from \cite{Tsai-book} which references the more detailed treatment in~\cite{galdi}.  In the application below, the domains $\Omega$ are shells $\{  x:r\leq |x|\leq 2r  \}$. For such domains, we have $c(\Om,2) = C r$.

\begin{proof}[Proof of Lemma~\ref{lemma.zero}]
Let $\psi\in \mathcal D(\R^3\times (0,T))^3$, and choose  $R$ so that  $B=B_R(0)$ is a ball containing the support of $\psi$ at all times $t\in (0,T)$. 
Since $c_i(t)u_j(t)\in L^2$ for almost every $t$, we have 
\[
\int_0^T \langle    \pi ,  \nb \cdot \psi \rangle\,dt
=\int_0^T\int     G_{ij}^B (c_iu_j)     \nb \cdot \psi \,dx\,dt,
\]
where we have used that $\nb\cdot \psi$ is mean zero to eliminate the constant and skew-adjointness of the Riesz transforms in $L^2$ to move $R_iR_j$ from the test function to the localized quadratic term in the near part of the pressure expansion.

Fix a smooth cut-off $\ga$, which equals $1$ on $B$ and
zero off of $B_{2R}(0)$. 
Let $\ga_\e (x)= \ga(\e x)$ so that $\ga_\e=1$ on $B_{R/\e}(0)$.  Let $\eta_\e$ be a space-time mollifier and
$\td u^\e =  \ga_\e (\eta_\e * u_\e)$.  Then, $\td u^\e \to u$ in $L^p_\loc(\R^3\times (0,T))$ for every $1\leq p\leq 2$. 
Furthermore, $\td u^\e\in L^2_\loc(\R^3\times (0,T))$ is smooth, bounded and compactly supported.  However, $\nb \cdot  \td u^\e  =  ( \eta_\e * u) \cdot \nb \ga_\e$ in general does not vanish.
Applying  Lemma~\ref{lemma.bogovskii}, we obtain that
\[
u^\e = \td u^\e - \Psi^\e
\]
is divergence-free,
where
we have denoted $\Psi^\e = \Psi( ( \eta_\e * u) \cdot \nb \ga_\e)$.
Note that Lemma~\ref{lemma.bogovskii} is being applied to the domain $\supp \nb \ga_\e$, and, and thus $\Psi^\e =0 $ on~$B_{R/\e}(0)$.  We furthermore have 
\[
\|  \Psi^\e \|_{H^1}\leq C \e^{-1} \| (\eta_\e * u) \cdot \nb \ga_\e \|_{L^2} \leq C  \| \eta_\e * u \|_{L^2},
\]
where we have used the estimate from Lemma \ref{lemma.bogovskii} and the fact that
$\|\nb \ga_\e\|_{L^\I}\lesssim \e$.
Let $\pi^\e$ satisfy the local pressure expansion as a distribution with $u_iu_j$ replaced by $f_{ij} = c_i\td u_j^\e=c_iu_j^\e + c_i\Psi_j^\e$.
 Following ideas in \cite{BT7,KiSe,KwTs}, it is possible to show $\nb \pi^\e\to \nb \pi$ in $\mathcal D'(\R^3\times (0,T))$ (for reference, see \cite[Lemma~5.2]{BT7}).  
Now,  the functions $c_i u_j^\e$ all have compact support and  belong to~$L^2$. So,
the modified formula for the Riesz transforms, i.e., the local pressure expansion, agrees with the classical formulas for the Riesz transforms modulo an additive constant. This constant does not appear when tested against $\nb \cdot \psi$ since $\nb\cdot \psi$ has mean zero. This leads to 
\EQ{\label{eq.2.2}
\int    \pi^\e     \nb \cdot \psi \,dx &=  \int G_{ij}^B (c_i u_j^\e )\,\nb \cdot \psi \,dx + \int G_{ij}^B (c_i\Psi_j^\e )\,\nb \cdot \psi \,dx
\\&= \int G_{ij} (c_i u_j^\e )\,\nb \cdot \psi \,dx + \int G_{ij}^B (c_i\Psi_j^\e )\,\nb \cdot \psi \,dx
\\&= \int c_i u_j^\e G_{ij}  \nb \cdot \psi  \,dx + \int G_{ij}^B (c_i\Psi_j^\e )\,\nb \cdot \psi \,dx
\\&= c_i \int u_j^\e     \partial_j\partial_i (-\Delta)^{-1} ( \nb \cdot \psi ) \,dx+ \int G_{ij}^B (c_i\Psi_j^\e )\,\nb \cdot \psi \,dx,
}
for every time $t\in (0,T)$.
We will show that the first term on the right-hand side vanishes while the second term, when considered inside a time integral, goes to zero as $\e\to 0$.
Since all the terms are smooth in the first integral on the last line of \eqref{eq.2.2} and $u_j^\e$ is compactly supported, we may integrate by parts and  use that $u^\e$ is divergence-free to conclude that 
\[
 c_i \int u_j^\e    \partial_j\partial_i (-\Delta)^{-1} ( \nb \cdot \psi ) \,dx=0.
\]
On the other hand, 
\EQ{
&\int_0^T\int G_{ij}^B (c_i\Psi_j^\e )\,\nb \cdot \psi \,dx  \,dt
\\&\leq \int_0^T \int R_i R_j( c_i\Psi_{j}^\e \th) \nb\cdot \psi \,dx\,dt +C \int_0^T\int \nb\cdot \psi \int_{|y|\geq R}\frac 1 {|y|^4} |c_i| |\Psi_j^\e| \,dy\,dx\,dt
,
}
where $C$ is independent of~$\e$.  Note that $\supp \th$ is compact and independent of~$\e$. On the other hand, $\Psi_j^\e(y) = 0$ for $y\leq R/\e$. So, by taking $\e$ small enough, we can guarantee that $\th \Psi_j^\e \equiv 0$. Hence,
\[
\lim_{\e\to 0^+}\int_0^T \int R_iR_j( c_i\Psi_{j}^\e \th) \nb\cdot \psi \,dx\,dt  = 0.
\]
For the other term, and again because $\Psi_j^\e(y) = 0$ for $y\leq R/\e$, we have 
\EQ{
\int_{|y|\geq R}\frac 1 {|y|^4} |c_i| |\Psi_j^\e| \,dy &\leq \int_{|y|\geq R/\e}\frac 1 {|y|^4} |c_i| |\Psi_j^\e| \,dy
\leq C \bigg(\frac \e R   \bigg)^{5/2} \|\Psi_j^\e \|_2 
\\&\leq C \e^{3/2} \| (\eta_\e * u) \cdot\nb \ga_\e \|_2
,
}
where we used Lemma~\ref{lemma.bogovskii}. Note that 
$(\eta_\e * u)\cdot\nb  \ga_\e = (\eta_\e * (u\chi_{B_{2R/\e}}) )\cdot \nb \ga_\e $, and
thus
\EQ{
C \e^{3/2} \| (\eta_\e * u) \cdot\nb \ga_\e \|_2&\leq C \e^{3/2} \| \nb \ga_\e \|_\I \| \eta_\e *(u\chi_{B_{2R/\e}})\|_2
\\&\leq C \e^{5/2}   \| \eta_\e * (u  \chi_{B_{2R/\e}})\|_2
\leq C \e^{5/2}  \|u\|_{L^2(B_{2R/\e})}.
}
Therefore,
\EQ{
 &
 \int_0^T\int \nb\cdot \psi \int_{|y|\geq R}\frac 1 {|y|^4} |c_i| |\Psi_j^\e| \,dy\,dx\,dt
 \\&\indeq
 \leq C \e^{5/2} \esssup_{0<s<T} \|u\|_{L^2(B_{2R/\e})} \int_0^T\int \nb \cdot \psi \,dx\,dt 
 \leq C \e \|u\|_{L^2_\uloc(0,T)}.
}
These observations imply
\EQ{
\int_0^T\int G_{ij}^B (c_i\Psi_j^\e )\,\nb \cdot \psi \,dx  \,dt \to 0\mbox{ as }\e\to0.
}
It follows that $\nb \pi^\e \to 0$ as $\e\to 0$ in $\mathcal D'(\R^3\times (0,T))$.
Since we also know $\nb \pi^\e \to \nb \pi$ in $\mathcal D'(\R^3\times (0,T))$,
we conclude that $\nb \pi = 0 $ in $\mathcal D'(\R^3\times (0,T))$.
\end{proof}

\begin{lemma}\label{lemma.harmonic}Assume $u_0\in L^1_\loc$ is divergence-free and $u\in L^2_\uloc(0,T)$ is a weak solution with an associated pressure~$p$.   
Let $  \bar p$ be given by \eqref{def.pressure.dist} for~$u$.   
Then, $p_h=p-\bar p$ is harmonic.
\end{lemma}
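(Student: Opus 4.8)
The plan is to show that both $p$ and $\bar p$ satisfy, in $\mathcal D'(\R^3\times(0,T))$, the same second-order equation $\Delta(\cdot)=-\partial_i\partial_j(u_iu_j)$; subtracting then gives $\Delta(p-\bar p)=0$, which is exactly the assertion that $p_h$ is (spatially) harmonic. The whole argument is a distributional computation, with no regularization needed, provided one is careful about the pieces of $\bar p$.

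First I would treat the genuine pressure $p$. Since $(u,p)$ solves \eqref{eq.NSE} in $\mathcal D'$, I have the momentum equation $\nabla p=\Delta u-\partial_t u-u\cdot\nabla u$ as a distributional identity. Applying $\partial_k$ to the $k$-th component and summing, the terms $\Delta(\partial_k u_k)$ and $\partial_t(\partial_k u_k)$ vanish by incompressibility $\nabla\cdot u=0$, and the nonlinearity is put in divergence form $u_j\partial_j u_k=\partial_j(u_ju_k)$ (again using $\partial_j u_j=0$, which is legitimate because $u\otimes u\in L^1_\loc$). This yields $\Delta p=-\partial_i\partial_j(u_iu_j)$ in $\mathcal D'(\R^3\times(0,T))$.

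Next I would compute $\Delta\bar p$ locally on each ball. Fix $n$ and $\psi\in\mathcal D(B_n(0)\times(0,T))$; by \eqref{def.pressure.dist}, $\langle\bar p,\psi\rangle=\langle\bar p^{B_n(0)},\psi\rangle$ plus pairings against the spatially constant functions $\bar c_k(t)$. The constants drop out under $\Delta$ because $\int\Delta\psi\,dx=0$. For the near part, replacing $\psi$ by $\Delta\psi$ and using $R_iR_j\Delta=-\partial_i\partial_j$ together with $\th_n(-\cdot)\equiv1$ on $\supp\psi\subset B_n(0)\subset B_{2n}(0)$, I obtain $\langle\bar p_\near^{B_n},\Delta\psi\rangle=-\langle\partial_i\partial_j(u_iu_j),\psi\rangle$. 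For the far part, when $x\in B_n(0)$ the $y$-integration runs over $|y|>2n$, so $|x-y|>n>0$ and the kernel $K_{ij}(x-y)$ is harmonic in $x$; since $K_{ij}(x_0-y)$ is constant in $x$ and the bound $|K_{ij}(x-y)-K_{ij}(-y)|\lesssim|x|/|y|^4$, with its $x$-derivatives, is integrable against $|u_iu_j|\in L^1_\uloc$ over $|y|>2n$, I may differentiate under the integral sign to see that the far part is a spatially harmonic function and hence contributes nothing to $\Delta\bar p$. Collecting the three pieces gives $\Delta\bar p=-\partial_i\partial_j(u_iu_j)$ on $B_n(0)\times(0,T)$, and since $n$ is arbitrary this holds in $\mathcal D'(\R^3\times(0,T))$. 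Subtracting from the identity for $p$ yields $\Delta p_h=0$.

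The main obstacle is the careful distributional handling of the near part: $u_iu_j\th_n\in L^1$ is mapped by $R_iR_j$ only to a distribution, since the Riesz transforms are unbounded on $L^1$. One must therefore stay at the level of the duality pairing and use that the cutoff equals $1$ on $\supp\psi$, rather than attempt to manipulate $R_iR_j(u_iu_j\th_n)$ as a function. A secondary, more routine point is justifying differentiation under the integral and the convergence of the far-field integral, both of which follow from the $|y|^{-4}$ decay of the kernel difference and the uniform local $L^2$ bound on $u$.
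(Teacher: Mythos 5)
Your proof is correct, but it takes a genuinely different route from the paper's. The paper regularizes: it sets $F_{ij}^\e=\ga_\e\,\eta_\e*(u_iu_j)$, which is bounded and compactly supported (hence in $L^2$), lets $p^\e$ be the pressure expansion associated to $F_{ij}^\e$, uses the $L^2$ theory of Riesz transforms to identify $p^\e$ with the classical formula $G_{ij}F_{ij}^\e$ modulo constants, performs the computation in that smooth setting, and then passes to the limit via $\nb p^\e\to\nb \bar p$ (quoting \cite[Lemma~5.2]{BT7}) and $F_{ij}^\e\to u_iu_j$ in $\mathcal D'$. You instead work directly with the definition \eqref{def.pressure.dist}, and your three observations are exactly what make this legitimate: (i) for the near part, $R_iR_j\Delta\psi=-\partial_i\partial_j\psi$ is an exact identity whose right-hand side is supported in $\supp\psi$, so the cutoff $\th_n(-\cdot)$, which equals $1$ there, is transparent, and one never needs to interpret $R_iR_j(u_iu_j\th_n)$ as a function; (ii) the far part is a pointwise-defined function of $x$ on $B_n(0)$, smooth and harmonic there, since $|x-y|>n$ on the domain of integration and the $|y|^{-4}$ decay (better still for derivatives) against $u\in L^2_\uloc$ justifies differentiation under the integral; (iii) the spatially constant terms $\bar c_k(t)$ die against $\Delta\psi$ because $\int\Delta\psi\,dx=0$. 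What each approach buys: yours is self-contained---no mollification and no appeal to the convergence lemma of \cite{BT7}---and it isolates the structural reason the near/far decomposition interacts well with $\Delta$, namely that $R_iR_j\Delta$ is a \emph{local} operator on test functions; the paper's regularization is less kernel-explicit, reuses the machinery already set up for Lemma~\ref{lemma.zero} (so the two proofs stay uniform), and delegates the limiting step to a citable result. One small point in your favor: your sign is the correct one; the paper states the identity as $\partial_i\partial_j\psi=R_iR_j\Delta\psi$, a harmless slip (with the paper's convention $G_{ij}=R_iR_j=\partial_i\partial_j(-\Delta)^{-1}$ the identity reads $\partial_i\partial_j\psi=-R_iR_j\Delta\psi$), which its subsequent display silently corrects.
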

\begin{proof}
Let $\psi\in \mathcal D(\R^3)$ be given.  We know that
\[
\Delta p = - \partial_i\partial_j (u_i u_j)
\]
in~$\mathcal{D}'$.
It suffices to show  that
\[
\Delta \bar p = - \partial_i\partial_j (u_i u_j)
\]
in~$\mathcal D'$.

Let $\eta $ be a spatial mollifier, and let $F_{ij}^\e = \ga_\e \eta_\e *(u_iu_j)$, where $\ga_\e$ is defined in the proof of Lemma~\ref{lemma.zero}.  Let $p^\e$ satisfy the distributional local pressure expansion for the matrix with the entries~$F_{ij}^\e$.  Then, $p^\e$ also satisfies the local pressure expansion by skew symmetry of the Riesz transforms in $L^2$ and the fact that $F_{ij}^\e$ is in $L^2$ since it is bounded and compactly supported.

Note that $\nb p^\e \to \nb \bar p$ (again, see \cite[Lemma~5.2]{BT7}) and $F_{ij}^\e \to F_{ij}$ (this is obvious by properties of mollifiers) all in the  distributional sense. Furthermore, $F_{ij}^\e$ are compactly supported, so 
\EQ{\label{eq.6.19.19}
\int p^\e \phi \,dx = \int (G_{ij}^B F_{ij}^\e )\phi \,dx = \int (G_{ij} F_{ij}^\e) \phi \,dx,
}
for any $\phi \in \mathcal D$, so that $\phi$ has mean zero,
where $B$ is a ball containing the support of~$\phi$. 
For $\psi$ as given, 
we have
\EQ{ 
& \langle \bar p, \Delta  \psi \rangle   + \int u_i u_j \partial_i\partial_j \psi \,dx
\\& = \langle \bar p -p^\e ,\Delta \psi \rangle + \int (F_{ij}-F_{ij}^\e)\partial_i\partial_j \psi \,dx
+ \int ( p^\e \Delta \psi+  F_{ij}^\e \partial_i\partial_j \psi )\,dx.
}
By the well known identity $\partial_i\partial_j \psi=R_iR_j\Delta \psi $ from  \cite{Stein2}, we have
\EQ{
\int   F_{ij}^\e \partial_i\partial_j \psi \,dx &=-\int F_{ij}^\e R_i R_j\Delta \psi \,dx
= -\int G_{ij} F_{ij}^\e \Delta \psi \,dx
=-\int p^\e \Delta \psi \,dx,
}
by~\eqref{eq.6.19.19}.
%because $p^\e$ is defined to be the bounded linear functional on $\mathcal H^1$ that, restricted to $h\in \mathcal H^1_a$, is given by the mapping
%\[
%h\mapsto  \int F_{ij}^\e R_i R_j h \,dx.
%\]
%In this case we are letting $h=\Delta \psi$.  
Since  $\Delta \psi = \nb\cdot \nb \psi$ and   $\nb p^\e \to \nb \bar p$ in the sense of distributions,  we get
\[
\langle \bar p-p^\e , \Delta  \psi \rangle \to 0.
\]
By the convergence of $F_{ij}^\e$ to $F_{ij}$ in $\mathcal D'$, we obtain
\[
\int(F_{ij}-F_{ij}^\e)\partial_i\partial_j \psi \,dx \to 0.
\]
It follows that 
$\langle \bar p, \Delta  \psi \rangle  +\int u_iu_j \partial_i\partial_j \psi \,dx = 0$.
\end{proof}

\section{The structure of weak solutions}\label{sec.proof1}

We begin by recalling some details from \cite{Kukavica} and~\cite{KuVi}.  Let $u$ be a weak solution to the Navier-Stokes equations with pressure $p$, and let $\bar p$ satisfy the distributional local pressure expansion. By Lemma~\ref{lemma.harmonic}, $p_h=p-\bar p$ is harmonic.
As in \cite{Kukavica}, we see that $\partial_{lk} p_h=0$ for all $1\leq l,k\leq 3$.  Hence, for $1\leq k\leq 3$, $\partial_k p_h$ is a distribution that only depends on~$t$.

Fix an element $\be \in C_0^\I(\R^3)$ with $\int \be \,dx = 1$. Let  
\EQ{
\phi_k(t)&= \int u_k(x,t) \be(x)\,dx - \int u_{0k} (x)\be (x)\,dx - \int_0^t\int u_k(x,s)\Delta \be (x)\,dx\,ds 
\\& -\int_0^t \int u_k (x,s) u_j(x,s) \partial_j \be (x)\,dx\,ds - \lim_{\e\to 0^+}\int_{\e}^t \langle  \bar p, \partial_k\be\rangle \,ds,
}
for $t>0$.  
That right-hand side is finite valued is clear for the terms involving~$u$. For the pressure term, note that   $\langle \bar p,\partial_k\be \rangle\in L^1(0,T)$ (this is a consequence of the definition of $\bar p$; see the first paragraph in \cite[Proof of Lemma~3.1]{BT7}). This means that the limit is unnecessary when considering $\bar p$ and we subsequently remove it. 
We claim that $\phi_k'(t)=\partial_k p - \partial_k \bar p$ in $\mathcal D'(0,T)$.  With $\lambda\in \mathcal D(0,T)$, we have
\EQ{
-\langle \phi_k',\la \rangle &= \int_0^T\int u_k(x,t) \be(x) \la'(t)\,dx\,dt - \int_0^T \int u_{0k} (x)\be (x) \la'(t)\,dx\,dt
\\&- \int_0^T \la'(t) \int_0^t\int u_k(x,s)\Delta \be (x) \,dx\,ds \,dt
\\& -\int_0^T \la'(t)\int_0^t \int u_k (x,s) u_j(x,s) \partial_j \be (x)\,dx\,ds \,dt
\\& - \int_0^T \la'(t)   \int_0^t \langle  \bar p, \partial_k\be\rangle \,ds\,dt.
}
Note that 
\[
\int_0^T \int u_{0k} (x)\be (x) \la'(t)\,dx\,dt = 0
\]
since $ \int u_{0k} (x)\be (x)  \,dx$ is independent of~$t$.
Integrating by parts and using the fundamental theorem of calculus, a.e.~in $t$, we have 
\EQ{
-\langle \phi_k',\la \rangle &= \int_0^T\int u_k(x,t) \be(x) \la'(t)\,dx\,dt
+\int_0^T \int u_k(x,t)\Delta \be (x)\la(t) \,dx\,dt
\\&+\int_0^T  \int u_k (x,t) u_j(x,s) \partial_j \be (x)\la(t)\,dx \,dt
+\int_0^T    \langle  \bar p, \partial_k\be \la \rangle(t) \,dt.
}
Examining the right-hand side above, we see that it contains all terms from \eqref{eq.NSE} tested against $\be \la$ except for the pressure. Hence,
\[
-\langle \phi_k',\la \rangle =     \int_0^T    \langle  \bar p - p, \partial_k\be \la \rangle(t) \,dt,
\]
which shows that $\phi_k' = \partial_k (p - \bar  p)$ in~$\mathcal D'(0,T)$.  
 
Let $\td u(x,t)= u(y,t)+\phi(t)$, where $y = x-\Phi(t)$
and $\Phi(t)=\int_{0}^{t}\phi(s)\,ds$.  
Let $\nb \td p$ be the distribution defined by the map
\EQ{
&\psi \in \mathcal D(\R^3\times (0,T)) \mapsto
%\\& \int_0^T \int u_i (y)u_j(y) \th_R(x_0-y) R_i R_j ( \nb \cdot \psi)_{\Psi(t)} (y)\,dy\,dt
%\\&+ \int_0^T\int \int (K_{ij}(x-y) - K_{ij}(x_0-y)) (1-\th_R (x_0-y)) u_i u_j \,dy \nb \cdot \psi \,dx\,dt
%\\
\int_0^T \langle  \nb \bar p, \psi_{\Phi(t)}   \rangle \,dt,
}
where $( \nb \cdot \psi)_{\Phi(t)} = \nb \cdot \psi(\cdot + \Phi(t)  )$. It is not difficult to see that this defines a distribution because, for each time $t$, the translation $\psi_{\Phi(t)}$ belongs to $\mathcal D(\R^3)$, and so the action of $\nb \bar p$ is meaningful at every~$t$.   %On the other hand, $\psi_{\Phi(t)}$ is not in $\mathcal D(\R^3\times (0,T))$ due to a possible lack of time regularity. This is not a problem because, to prove continuity (in the definition of distributions), we only need spatial regularity of the test functions. \snt{I can make this more precise but it is tedious. Do we need more details?}

By straightforward computations,
\EQ{
&\partial_t \td u(x,t ) = \partial_t u(y,t) +\phi'(t) - \phi_k(t)\partial_k u(y,t),
\\&\Delta_x \td u(x,t)=\Delta_y u(y,t),
\\&\td u\cdot \nb \td u (x,t)= u\cdot \nb u(y,t) + \phi_k(t) \partial_k u(y,t),
%\\& \nb \td p(x,t) = \nb \bar p(y,t),
}
as distributions. 
%Checking this is straightforward and we include details for the first term only.
%For $\psi\in \mathcal D$ we have 
%\EQ
%\int_0^T\int \td u (x,t)\partial_t \psi \,dx \,dt & = \int_0^T\int  (u(y,t)+\phi(t)) (\partial_t \psi)(y+\Phi(t))  \,dx\,dt
%\\& = \int_0^T\int (u(y,t)+\phi(t))  (  \partial_t ( \psi(y+\Phi(t)) )   -\partial_{y_k}\psi(y+\Phi(t),t)\phi_k(t)\,dx\,dt
%\\&=   \int_0^T  \langle   \partial_t u(\cdot -\Phi(t)) +\phi'(t),\psi   \rangle+\langle \phi_k \partial_{k}u (\cdot -\Phi(t)), \psi \rangle \,dt.
%}
Therefore,
\EQ{
&\partial_t \td u(x,t) - \Delta \td u(x,t) +\td u \cdot \nb \td u(x,t) +\nb \td p(x,t) -\phi'(t)
\\&=\partial_t u(y,t) - \Delta u(y,t ) + u\cdot \nb u (y,t) +\nb \bar p(y,t) =0,
}
where all the equalities are understood in the sense of distributions.
We also have $\nb \bar p +\phi'(t)=\nb p$ as distributions. Thus, $\td u$ and $\td p$ solve \eqref{eq.NSE} in the sense of distributions.

We will now establish several properties of $\td u,\,\td p$, and~$\phi$.

\begin{lemma}\label{lemma.local.pressure.expansion}
The pressure $\td p$  satisfies the distributional local pressure expansion for~$\td u$. 
\end{lemma}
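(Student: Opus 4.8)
The plan is to exhibit a canonical pressure that manifestly satisfies the distributional local pressure expansion for $\td u$, and then identify its gradient with $\nb \td p$. Concretely, let $\bar{\td p}$ be the distribution produced by \eqref{def.pressure.dist} with $u_iu_j$ replaced by $\td u_i\td u_j$. By the construction recalled in Section~\ref{sec.pressure} (and \cite{BT7}), $\bar{\td p}$ satisfies the distributional local pressure expansion for $\td u$ by definition. Because that expansion is unaffected by adding a spatially constant function of time—such a term is absorbed into $c_{x_0,R}(t)$—and because the object supplied by the transgalilean construction is the gradient $\nb\td p$, it suffices to prove
\[
\nb \td p = \nb \bar{\td p} \quad \text{in } \mathcal D'(\R^3\times(0,T)).
\]

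For the first reduction, set $v(x,t)=u(x-\Phi(t),t)$, which again lies in $L^2_\uloc(0,T)$ and is divergence-free, and expand
\[
\td u_i \td u_j = v_i v_j + \big( \phi_i v_j + v_i \phi_j \big) + \phi_i \phi_j .
\]
By the linearity of \eqref{def.pressure.dist}, $\bar{\td p}$ splits into the three corresponding contributions. To the cross term I would apply Lemma~\ref{lemma.zero} with the divergence-free field $v$ and the spatially constant, bounded coefficients $c_i=\phi_i(t)$ (handling $v_i\phi_j$ by the symmetry $K_{ij}=K_{ji}$), obtaining vanishing gradient; to the term $\phi_i\phi_j$ I would apply the same lemma with the spatially constant—hence divergence-free and $L^2_\uloc$—field $\phi$ playing the role of $u$ and $c_i=\phi_i$, again with vanishing gradient. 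Hence $\nb\bar{\td p}$ equals the gradient of the construction \eqref{def.pressure.dist} applied to $v_iv_j$.

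It then remains to match that gradient with $\nb\td p$. Unwinding the definition of $\nb\td p$ and changing variables gives
\[
\langle \nb \td p, \psi \rangle = \int_0^T \langle \nb \bar p, \psi_{\Phi(t)} \rangle \,dt = \int_0^T \int (\nb \bar p)(x-\Phi(t),t)\cdot \psi(x,t)\,dx\,dt,
\]
so $\nb\td p$ is precisely the spatial translate of $\nb\bar p=\nb\pi[u_iu_j]$ by $\Phi(t)$. Thus the task is to show that the construction \eqref{def.pressure.dist} is translation covariant at the level of gradients, i.e. that the gradient of the construction for $v_iv_j=(u_iu_j)(\cdot-\Phi)$ equals $(\nb\bar p)(\cdot-\Phi)$. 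Conceptually this holds because the kernels $K_{ij}$ are translation-invariant convolution kernels, whereas the only ingredients of \eqref{def.pressure.dist} that break translation invariance—the far-field subtraction $K_{ij}(x_0-y)$ and the renormalizing constants $\bar c_k(t)$—are constant in $x$ and therefore annihilated by $\nb$. I would make this rigorous exactly as in the proof of Lemma~\ref{lemma.zero}: regularize $u_iu_j$ by $F^\e=\ga_\e\,\eta_\e*(u_iu_j)$, for which the construction agrees with the genuine translation-invariant operator $R_iR_j$ modulo a spatial constant, perform the change of variables $y\mapsto y-\Phi(t)$ at the regularized level, and pass to the limit using the convergence $\nb p^\e\to\nb\bar p$ from \cite[Lemma~5.2]{BT7}.

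The main obstacle is precisely this last step. The recursive, origin-centered procedure in \eqref{def.pressure.dist} is not itself translation invariant, so one must check carefully that translating the velocity alters the construction only by a spatially constant function of time; tracking that this discrepancy is a genuine spatial constant—so that it disappears under $\nb$ and is absorbed into $c_{x_0,R}(t)$—and justifying the interchange of the change of variables with the regularization limit is where the real work lies. Everything else is bookkeeping resting on Lemma~\ref{lemma.zero} and the convergence facts imported from \cite{BT7}.
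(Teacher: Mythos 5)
Your proposal is correct and follows essentially the same route as the paper's proof: decompose $\td u_i\td u_j$ into the translated term $v_iv_j$ plus the cross terms and $\phi_i\phi_j$, annihilate the latter via Lemma~\ref{lemma.zero}, and identify the gradient of the construction applied to $v_iv_j$ with $\nb\td p$. The only difference is one of emphasis: the translation-covariance step you flag as ``the real work'' is simply asserted in the paper (``the distributional local pressure expansion for $u(\cdot+\Phi(t))$ is just $\nb\td p$''), so your regularization sketch for that step is, if anything, more detailed than the published argument.
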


\begin{proof}
When we write down the local pressure expansion for $\td u$ we obtain the  distributional local pressure expansion for $u(\cdot + \Phi(t))$ plus the   terms
\[
 G_{ij}^B ( \phi_i(t) u_j (\cdot +\Phi(t),t)) + G_{ij}^B ( u_i( \cdot +\Phi(t),t) \phi_j(t)) + G_{ij}^B ( \phi_i(t) \phi_j(t)).
\]
By Lemma~\ref{lemma.zero}, these are all equal to zero. Since the distributional local pressure expansion  for $u(\cdot + \Phi(t))$ is just $\nb \td p$, and the proof is complete.
\end{proof}

\begin{lemma}\label{lemma.bound.phi} 
If $(u,p)$ is a weak solution in the sense of Definition~\ref{def.weak.sol}   for some divergence-free $u_0\in L^1_\loc$, then $u(t)\to u_0$ in $L^1_\loc(\R^3)$ (so also in $\mathcal D'(\R^3)$),
$\phi\in L^\I[0,T)$, and $\lim_{t\to 0^+}\phi(t)=0$. 
If, additionally, for every $w\in L^2$ with compact support we have that
\[
t\mapsto \int w(x) u(x,t)\,dx
\]
is continuous on $[0,T)$, then so is~$\phi$.
\end{lemma}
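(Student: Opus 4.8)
The plan is to reduce every assertion to the behaviour of the spatial averages $t\mapsto \langle u(t),\varphi\rangle$ for fixed test functions $\varphi$, exploiting that the weak formulation \eqref{eq.weakForm} forces these averages to be absolutely continuous in time with the correct trace at $t=0$. Concretely, I would test \eqref{eq.weakForm} with $\psi_k(x,t)=\varphi(x)\lambda(t)$, where $\varphi\in C_0^\I(\R^3)$ and $\lambda\in C_0^\I([0,T))$ with $\lambda(0)=1$; after using item (3) of Definition~\ref{def.weak.sol} to make sense of the pressure contribution, this exhibits $\langle u_k(t),\varphi\rangle$ as (a.e.\ equal to) an absolutely continuous function of $t$ whose derivative is $\langle u_k(t),\Delta\varphi\rangle+\langle u_ku_j(t),\partial_j\varphi\rangle+\langle p(t),\partial_k\varphi\rangle\in L^1(0,T)$ and whose value at $0^+$ is $\langle u_{0k},\varphi\rangle$. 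Letting $\varphi$ range over $C_0^\I$ gives $u(t)\to u_0$ in $\mathcal D'(\R^3)$ as $t\to0^+$; combined with $\esssup_{0<t<T}\|u(t)\|_{L^2_\uloc}<\I$ and the density of $C_0^\I$ in $L^2$, this upgrades to weak convergence $u(t)\rightharpoonup u_0$ in $L^2(K)$ for every compact $K$.

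With this convergence in hand, the three conclusions about $\phi$ are essentially bookkeeping, all resting on the uniform bound $M:=\esssup_{0<t<T}\|u(t)\|_{L^2_\uloc}<\I$. For boundedness I would estimate the five terms defining $\phi_k$ one at a time: $\langle u_k(t),\be\rangle$ is bounded uniformly in $t$ by $\|\be\|_{L^2}\,M$; the constant $\langle u_{0k},\be\rangle$ is finite since $u_0\in L^1_\loc$ and $\be$ has compact support; the two time-integral terms $\int_0^t\langle u_k,\Delta\be\rangle\,ds$ and $\int_0^t\langle u_ku_j,\partial_j\be\rangle\,ds$ are bounded by $TM$ and $TM^2$ up to constants; and $\int_0^t\langle\bar p,\partial_k\be\rangle\,ds$ is bounded by $\|\langle\bar p,\partial_k\be\rangle\|_{L^1(0,T)}$, which is finite by the cited property of $\bar p$. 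Hence $\phi\in L^\I[0,T)$.

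For the limit at the origin, the three time-integral terms are integrals of $L^1(0,T)$ functions over $[0,t]$ and so tend to $0$ as $t\to0^+$, while $\langle u_k(t),\be\rangle-\langle u_{0k},\be\rangle\to0$ by the distributional convergence from the first step; therefore $\phi(t)\to0$. For continuity under the extra hypothesis I would observe that $t\mapsto\langle u_k(t),\be\rangle$ is continuous by assumption (taking $w=\be\in L^2$ with compact support), the constant term is trivially continuous, and the three time-integral terms are absolutely continuous in $t$; hence $\phi$ is continuous on $[0,T)$.

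The main obstacle is the \emph{strong} $L^1_\loc$ convergence $u(t)\to u_0$. The weak ODE controls each average $\langle u(t),\varphi\rangle$ with a time-modulus governed by $\|\Delta\varphi\|_{L^2}$, which degenerates as $\varphi$ becomes more oscillatory; equivalently, the family $\{u(t)\}_{0<t<T}$ is only \emph{bounded} in $L^2_\loc$ and need not be precompact in $L^1_\loc$ (oscillation is not excluded). Thus the $\mathcal D'$- (or weak $L^2_\loc$-) convergence above does not upgrade to strong $L^1_\loc$ convergence by soft arguments, and closing this gap requires an equi-integrability or compactness input—e.g.\ a spatial modulus of continuity extracted from the equation—beyond the bare $L^2_\uloc$ bound. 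I expect the real work to lie in this step; every remaining assertion follows mechanically from the weak ODE for the averages.
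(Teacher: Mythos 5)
Everything your proposal actually completes coincides, step for step, with the paper's own proof. The paper also obtains the convergence to the data by testing the weak formulation against spatial test functions and using item (3) of Definition~\ref{def.weak.sol} to force $\int_{0+}^{\tau}\langle p,\partial_k \psi_k\rangle\,dt\to 0$ as $\tau\to 0^+$; it phrases this via Lebesgue points of $t\mapsto\int u_k\psi_k\,dx$ rather than via an absolutely continuous representative, but these are the same argument. The three assertions about $\phi$ are likewise proved exactly as you propose: term-by-term bounds using $u_0\in L^1_\loc$ and $\esssup_t\|u(t)\|_{L^2_\uloc}<\I$; the integrability $\langle\bar p,\partial_k\be\rangle\in L^1(0,T)$ quoted from \cite{BT7} (the paper additionally re-derives the near-field bound using that $\partial_k\be$ is Lipschitz, so that $\|R_iR_j\partial_k\be\|_\I<\I$, and cites \cite{BT7,KiSe,KwTs} for the far field); vanishing at $t=0^+$ because the time integrals have $L^1(0,T)$ integrands; and continuity, which the paper declares ``obvious.'' One caution on wording: for the original pressure $p$ you cannot claim the pressure contribution to the derivative of $\langle u_k(t),\varphi\rangle$ lies in $L^1(0,T)$; item (3) provides only a convergent improper integral, and it is the Cauchy criterion for that limit (not integrability up to $t=0$) that sends $\int_{0+}^{\tau}\langle p,\partial_k\varphi\rangle\,dt$ to zero. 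That is precisely how the paper uses the hypothesis, and your parenthetical appeal to item (3) is the right fix.

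The gap you flag---the strong convergence $u(t)\to u_0$ in $L^1_\loc$---is a genuine gap in your proposal, since it is part of the lemma's statement, and your diagnosis of why it resists soft arguments is correct: a uniform $L^2_\uloc$ bound plus distributional convergence does not preclude oscillation, so it does not upgrade to strong $L^1_\loc$ convergence. But you should know that the paper does not close this gap either: after establishing convergence in $\mathcal D'$ it simply writes that, moreover, $u\to u_0$ in $L^1_\loc$, with no supporting argument. So you have not missed an idea that is present in the paper; you have located a thin point in it. It is also worth observing that the strong convergence is not really needed downstream: in the proof of Theorem~\ref{thrm.structure}, the term $\int \bigl(u(y,t)-u_0(y)\bigr)\psi(y+\Phi(t))\,dy$ can be handled with only $\mathcal D'$ convergence by splitting $\psi(\cdot+\Phi(t))=\psi+\bigl(\psi(\cdot+\Phi(t))-\psi\bigr)$, using $\|\psi(\cdot+\Phi(t))-\psi\|_{\I}\lesssim|\Phi(t)|\to 0$ together with the uniform bound on $\|u(t)-u_0\|_{L^1(K')}$ over a fixed compact set $K'$, so weakening the lemma's conclusion to distributional convergence would not affect the main theorem.
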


\begin{proof}
We first show $u(t)\to u_0 \in \mathcal D'$. Our argument is very similar to \cite[Proof of Lemma~3.1]{KuVi}. With $\psi\in \mathcal D(\R^3)$, we have
$\int u_k(\cdot,t) \psi_k \,dx \in L^{1}([0,T))$ because $u\in L^2_\uloc(0,T)$.
Let $\tau$ be in the Lebesgue set of $\int u_k(\cdot,t) \psi_k \,dx$. Following \cite[Proof of Lemma~3.1]{KuVi}, we obtain by the Lebesgue Differentiation Theorem that  
\EQ{
&
\int u_k(x,\tau) \psi_k (x)\,dx - \int u_{0k}(x)\psi_k(x)\,dx
\\&\indeq
= \int_0^\tau \int u_k( \Delta \psi_k +u_j\partial_j \psi_k)\,dx\,dt + \int_{0+}^\tau \langle p ,\partial_k \psi_k\rangle \,dt.
}
Note that
\[
\int_0^\tau \int
\Bigl(
|   u_k \Delta\psi_k| + |u_ku_j \partial_j \psi_k|
\Bigr)
\,dx\,dt \to 0,
\]
as $\tau \to 0$ because $u\in L^2_\uloc(0,T)$. On the other hand, our assumption that $\int_{0+}^T \langle p,\partial_k \psi\rangle\,dt$ exists for every $\psi\in C_0^\I(\R^3\times [0,T))$ implies
\[
 \int_{0+}^\tau \langle p ,\partial_k \psi_k\rangle \,dt \to 0,
\]
as $\tau \to 0$.
Therefore,  $u\to u_0$ in $\mathcal D'$ and, moreover, $u\to u_0\in L^1_\loc$.

Let $\be \in C_0^\I(\R^3)$ be as above. We estimate $\phi\in L^\I[0,T)$ by bounding each term on the right-hand side of the equation
\EQ{\label{eq.6.19.19.2}
\phi_k(t)&= \int u_k(x,t) \be(x)\,dx - \int u_{0k} (x)\be (x)\,dx - \int_0^t\int u_k(x,s)\Delta \be (x)\,dx\,ds 
\\& -\int_0^t \int u_k (x,s) u_j(x,s) \partial_j \be (x)\,dx\,ds - \lim_{\e\to 0}\int_\e^t \langle \bar p,\partial_k\be \rangle \,ds,
}  
for a.e.~$t\in [0,T)$.  
Note that $u_0\in L^1_\loc$, so the first term on the right-hand side is bounded. 
The next two terms are finite because $u\in L^2_\uloc(0,T)$. 
For the pressure, we note that $ \bar p$ satisfies  the local pressure expansion as a distribution for $u$, so we can write $  \bar p = \bar p_\text{near}+  \bar p_\text{far}$.   For the local part, we have 
\[
\int_{0+}^t \int u_iu_j \th_R  R_iR_j \partial_k \be(x)\,dx\,ds  \leq C  \|u\|_{L^2_\uloc(0,T)}^2,
\]
where $R$ is large enough so that $\supp\be \subset B_R(0)$; we have used that $\partial_k \be$ is Lipschitz to deplete the singularity in the singular integral to obtain $\| R_iR_j \partial_k \be\|_\I <\I$.
The far-field part of the pressure is controlled as in~\cite{BT7,KiSe,KwTs}. 

We have already shown that most of the terms that make up $\phi$ vanish at $t=0$ when we established that $u\to u_0$ in~$\mathcal D'$. Consequently, to prove $\phi(t)\to 0$ as $t\to 0$, it suffices to note that  $\langle \bar p,\partial_k\be \rangle\in L^1(0,T)$ (see the first paragraph in \cite[Proof of Lemma~3.1]{BT7}). 

Finally, the
continuity is obvious.
\end{proof}
 
We are now ready to prove Theorem~\ref{thrm.structure}.

\begin{proof}[Proof of Theorem~\ref{thrm.structure}]
Let $\td u(x,t)= u(x-\Phi(t),t)+\phi(t)$ and $\td p (x,t)=  \bar p(x-\Phi(t),t)$, where $\phi$ is as above.  By Lemmas \ref{lemma.local.pressure.expansion} and \ref{lemma.bound.phi}, $  \td p$ satisfies the distributional local pressure expansion, $\phi \in L^\I(0,T)$, and $\phi(t)\to 0$ as $t\to 0$.

It remains to show that $(\td u,\td p)$ is a weak solution. We know that $(\td u,\td p)$ solves \eqref{eq.NSE} distributionally.   
Since $\phi \in L^\I(0,T)$, we   have $\td u \in L^2_\uloc(0,T) $.

We need to prove that the pair $(\td u ,\td p)$ is weak in the sense of Definition~\ref{def.weak.sol}. At our disposal we have the distributional local pressure expansion and convergence in $\mathcal D'(\R^3)$ to the initial data (the latter is from Lemma~\ref{lemma.bound.phi}).
Consider
\EQ{
\int_K   (\td u(x,t)-u_0(x) ) \psi\,dx  
 &= \int    (u(x-\Phi(t),t)  - u_0(x-\Phi(t)))\psi \,dx
\\&+\int   (u_0(x-\Phi(t))-u_0(x)) \psi  \,dx + \int \phi(t)\psi \,dx.
}
The third term on the right-hand side clearly vanishes as $t\to 0$.  The second term vanishes as $t\to 0$ by continuity of translations in $L^1$ and $\Phi(t)\to 0$ as $t\to 0$.   The first vanishes after making a change of variables, noting that the transformed region of integration is still compact and using $u\to u_0$ in~$L^1_\loc$.

We now check that 
\EQ{\label{7.26.19}
\lim_{\e\to 0} \int_\e^T \langle \td p ,\partial_k \psi_k\rangle \,dt = \int_{0+}^T \langle \td p ,\partial_k \psi_k\rangle \,dt
}
exists.
This follows from the Dominated Convergence Theorem (DCT) once we note that $| \langle \td p (t),\partial_k \psi_k\rangle| \in L^1(0,T)$ whenever $u\in L^2_\uloc(0,T)$ (see the first paragraph in \cite[Proof of Lemma~3.1]{BT7}). 
%We do this for near- and far-field parts separately.
%For the near-field part we have
%\EQ{
%  \chi_{(\e,T)}(t)   \int u_i u_j\th(x_0-\cdot) R_i R_j (\nb \cdot \psi)\,dx
%&\leq C\| R_i R_j (\nb \cdot \psi)\|_\I \| u\,\th^{1/2}\|_{L^2}^2 \in L^1(0,T),
%}
%and so may use the~DCT. For the far-field part we have
%\[
%\chi_{(\e,T)} \int G_{ij}^B (u_i u_j (1-\th)) \partial_k\psi \,dx \leq \chi_{(0,T)} \int |G_{ij}^B (u_i u_j (1-\th))| | \partial_k\psi| \,dx,
%\] 
%from which we infer
%\EQ{
%\int_0^T  \int |G_{ij}^B (u_i u_j (1-\th)) |\,|\partial_k\psi |\,dx
%&\leq C \|\partial_k\psi \|_\I  \|u\|_{L^2_\uloc \cmr{(0,T)}}^2,
%}
%\snt{Need to be careful about passing from $\th$ to $L^2_\uloc$ given time integration does not commute with sup over $x_0$}
%and so we can again use the dominated convergence theorem.  This proves~\eqref{7.26.19}.

We finally check that  \eqref{eq.weakForm} is satisfied by $(\td u,\td p)$. Let $\psi\in \mathcal D'(\R^3\times [0,T)  )$. Using the Lebesgue Differentiation and the DCT, it is possible to show for a.e.~$\tau\in (0,T)$ that 
\[
\int_\tau^T \int \td u_k(\partial_t \psi_k +\Delta \psi_k +\td u_j\partial_j \psi_k)  \,dx\,dt +\int_\tau^T \langle \td p ,\partial_k\psi_k\rangle \,dt = - \int \td u_k(\cdot, \tau) \psi_k(\cdot,\tau)\,dx.
\]
Thus we may choose a sequence of times  $\{\tau_n\}$ so that $\tau_n\to 0^+$ and
\[
\int_{\tau_n}^T \int \td u_k(\partial_t \psi_k +\Delta \psi_k +\td u_j\partial_j \psi_k \,dx\,dt +\int_{\tau_n}^T \langle \td p ,\partial_k\psi_k\rangle  \,dt= - \int \td u_k(\cdot, \tau_n) \psi_k(\cdot,\tau_n)\,dx.
\]
The right-hand side of this expression converges to $-\int u_{0k}(x) \psi(\cdot,x)\,dx $, while, by the DCT, the left-hand side converges to the left-hand side of~\eqref{eq.weakForm}.
\end{proof}

\section{Sufficient conditions for pressure expansion}\label{sec.proof2}

For a fixed $\be$,
denote $\beta_R(x)= {R^{-3}} \beta(x/R)$,
and let $\phi_R$ be the vector with components
\EQ{
\phi_{R,k}(t)&= \int u_k(x,t) \be_R(x)\,dx - \int u_{0k} (x)\be_R (x)\,dx - \int_0^t\int u_k(x,s)\Delta \be_R (x)\,dx\,ds 
\\& -\int_0^t \int u_k (x,s) u_j(x,s) \partial_j \be_R (x)\,dx\,ds - \lim_{\e\to 0}\int_\e^t\langle  \bar p(x,s),\partial_k\be_R(x)\rangle\,ds,
}
for $k=1,2,3$.
%Let $\phi_R$ be the vector with coordinate coefficients~$\phi_{R,k}$.

\begin{lemma}Under the assumptions of Theorem~\ref{thrm.sufficient}, $\phi_R(t)\to 0$ in $L^1(0,T)$ and $\Phi_R(t) =\int_0^t\phi_R(s)\,ds\to 0$ in $L^\I(0,T)$ as $R\to \I$.
\end{lemma}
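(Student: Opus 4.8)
The plan is to show that each of the five terms defining $\phi_{R,k}(t)$ converges to zero in $L^1(0,T)$ as $R\to\infty$, exploiting the rescaling $\beta_R(x)=R^{-3}\beta(x/R)$ and the two decay hypotheses on $u_0$ and $u$. The key structural observation is that $\beta_R$ is normalized so that $\int\beta_R\,dx=1$ for all $R$, which is exactly the normalization used to define $\phi_k$ in Section~\ref{sec.proof1}; moreover $\supp\beta_R\subset B_{cR}(0)$ for some fixed constant $c$ depending only on $\beta$, and the spatial derivatives scale as $\|\nabla\beta_R\|_\infty\lesssim R^{-4}$, $\|\Delta\beta_R\|_\infty\lesssim R^{-5}$. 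These scalings are what convert the volume-normalized decay hypotheses $\frac{1}{R^3}\int_{B_R}|u_0|\to 0$ and $\frac{1}{R^3}\int_0^T\int_{B_R}|u|^2\to 0$ into smallness of each term.

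First I would handle the initial-data term: since $\int u_{0k}\beta_R\,dx$ is independent of $t$, its $L^1(0,T)$ norm is $T$ times its absolute value, and $|\int u_{0k}\beta_R\,dx|\lesssim R^{-3}\int_{B_{cR}(0)}|u_0|\,dx\to 0$ by hypothesis. The same supremum-bound controls the first term $\int u_k(x,t)\beta_R\,dx$ after integrating in $t$, using $\frac{1}{R^3}\int_{B_{cR}}|u|\,dx\lesssim\big(\frac{1}{R^3}\int_{B_{cR}}|u|^2\,dx\big)^{1/2}$ by Cauchy--Schwarz (the $R^{3}$ factors balance because $|B_{cR}|\sim R^3$), and then integrating in $t$ against \eqref{cond.decay}. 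The Laplacian term carries an extra $\|\Delta\beta_R\|_\infty\lesssim R^{-5}$ together with a spatial integral over $B_{cR}$ and a time integral, giving a bound like $R^{-5}\cdot R^{3/2}\big(\int_0^T\int_{B_{cR}}|u|^2\big)^{1/2}\cdot T^{1/2}$, which is even smaller and vanishes. The nonlinear term $\int_0^t\int u_ku_j\partial_j\beta_R$ is controlled directly by $\|\nabla\beta_R\|_\infty\lesssim R^{-4}$ times $\int_0^T\int_{B_{cR}}|u|^2$, i.e.\ $R^{-4}\cdot R^3\cdot\frac{1}{R^3}\int_0^T\int_{B_{cR}}|u|^2\to 0$.

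The pressure term $\int_0^t\langle\bar p,\partial_k\beta_R\rangle\,ds$ is the main obstacle. Here I would decompose $\bar p=\bar p_{\near}+\bar p_{\far}$ as in Definition~\ref{def.local.pressure2} with the cutoff radius chosen compatibly with $\supp\beta_R$. For the near part, $\langle\bar p_{\near},\partial_k\beta_R\rangle=\int u_iu_j\,\theta_R(\,\cdot\,)\,R_iR_j(\partial_k\beta_R)\,dx$, and since the Riesz transforms are bounded on $L^2$ and $R_iR_j(\partial_k\beta_R)$ rescales so that $\|R_iR_j\partial_k\beta_R\|_{L^2}\lesssim R^{-4}\cdot R^{3/2}=R^{-5/2}$, Cauchy--Schwarz against $\|u\|_{L^2(B_{cR})}^2$ yields a bound $\lesssim R^{-5/2}\cdot R^{3/2}\cdot\frac{1}{R^3}\int_{B_{cR}}|u|^2$ which, after time integration, is controlled by \eqref{cond.decay} up to the favorable power of $R$. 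The far part requires more care: the kernel difference $(K_{ij}(x-y)-K_{ij}(x_0-y))$ on the far field $|y|\gtrsim R$ together with $\partial_k\beta_R$ and the mean-value estimate $|K_{ij}(x-y)-K_{ij}(x_0-y)|\lesssim |x-x_0|/|y|^4$ should produce an integrable tail, and the $R^{-4}$ from $\nabla\beta_R$ combined with the dyadic decomposition of the far-field integral in annuli $|y|\sim 2^mR$ should again yield a bound dominated by $\sup_{m}\frac{1}{(2^mR)^3}\int_{B_{2^mR}}|u|^2$, which tends to zero. This is the step where one must be careful that the far-field sum over dyadic shells converges and that the hypothesis, though only centered at the origin, suffices after translating the test function's support; I expect the argument of \cite{BT7,KiSe,KwTs} for bounding the far-field pressure can be invoked with the rescaled weights tracked explicitly.

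Once all five terms are shown to be $O(R^{-\gamma})$ or controlled by quantities tending to zero, summing gives $\|\phi_R\|_{L^1(0,T)}\to 0$. Finally, for the statement about $\Phi_R$, I would use $\Phi_{R,k}(t)=\int_0^t\phi_{R,k}(s)\,ds$, so that $\|\Phi_R\|_{L^\infty(0,T)}\le\int_0^T|\phi_R(s)|\,ds=\|\phi_R\|_{L^1(0,T)}\to 0$, which is immediate from the first conclusion and requires no additional work.
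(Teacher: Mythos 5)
Most of your argument coincides with the paper's: the first four terms are estimated exactly as in the paper via the scalings $\|\beta_R\|_\infty\lesssim R^{-3}$, $\|\nabla\beta_R\|_\infty\lesssim R^{-4}$, $\|\Delta\beta_R\|_\infty\lesssim R^{-5}$, the far-field pressure is handled by the same mean-value kernel bound plus dyadic shells (your worry about re-centering is unfounded, since $\beta_R$ is supported in $B_{cR}(0)$ the origin-centered hypothesis \eqref{cond.decay} is the relevant one; in fact the far part closes with only $\|u\|_{L^2_\uloc(0,T)}$ because of the extra $R^{-1}$ coming from $\|\partial_k\beta_R\|_{L^1}$), and the $\Phi_R$ statement is the same one-line consequence.

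The genuine gap is in your near-field pressure estimate. Under the hypotheses of Theorem~\ref{thrm.sufficient}, $u$ lies only in $L^2_\uloc(0,T)\cap L^2_\loc$, so at a.e.\ time $u_iu_j\theta_R$ is merely an $L^1$ function; it need not belong to $L^2$ (that would require $u(t)\in L^4_\loc$, which is not assumed --- avoiding exactly this is why the paper keeps the Riesz transforms on the test function). Consequently ``Cauchy--Schwarz'' cannot be applied to $\int u_iu_j\theta_R\,R_iR_j(\partial_k\beta_R)\,dx$ in the way you propose: you pair $\|u\|_{L^2(B_{cR})}^2$, which is essentially $\|u_iu_j\theta_R\|_{L^1}$, against $\|R_iR_j\partial_k\beta_R\|_{L^2}\lesssim R^{-5/2}$, and $L^1\times L^2$ is not a H\"older pairing; the factor $\|u_iu_j\theta_R\|_{L^2}$ that Cauchy--Schwarz would actually require may be infinite. (Indeed your claimed bound $R^{-5/2}\cdot R^{3/2}\cdot R^{-3}\int_{B_{cR}}|u|^2=R^{-4}\int_{B_{cR}}|u|^2$ is dimensionally the $L^1\times L^\infty$ bound, a sign the exponents do not match.) What is needed is the sup-norm bound $\|R_iR_j(\partial_k\beta_R)\|_{L^\infty}\lesssim R^{3/p-4}$ for some $p>3$, and this does \emph{not} follow from $L^2$-boundedness of the Riesz transforms (which are not bounded on $L^\infty$); it must be proven by hand. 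That is precisely what the paper does: write $R_iR_j\partial_k\beta_R$ through the kernel $K_{ij}$, subtract the constant $(\partial_k\beta)(x/R)$ using that $K_{ij}$ has mean zero on spheres, use the Lipschitz bound $|(\partial_k\beta)(y/R)-(\partial_k\beta)(x/R)|\lesssim|x-y|/R$ on $|x-y|\le 1$, and H\"older with $p>3$ on $|x-y|>1$. With that bound in hand, pairing against $\|u_iu_j\theta_R\|_{L^1}\lesssim\int_{B_{4R}}|u|^2\lesssim R^3\|u\|_{L^2_\uloc(0,T)}^2$ gives a contribution $\lesssim T R^{3/p-1}\|u\|_{L^2_\uloc(0,T)}^2\to 0$, and the remainder of your argument then goes through as written.
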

 
\begin{proof}
We first show that $\phi_R(t)$    vanishes in~$L^1(0,T)$. For the first term, we have
\EQ{
\left|
\int_0^T \int u_k \be_R (x) \,dx\,dt
\right|
&\leq C\| \be_R \|_{L^\I} R^3 \bigg(\frac 1 {R^3}  \int_0^T\int_{B_R(0)} |u|^2 \,dx\,dt  \bigg)^{1/2}
\\& \leq C  \bigg( \frac 1 {R^3}\int_0^T\int_{B_R(0)} |u|^2\,dx\,dt  \bigg)^{1/2} \to 0,
}
as $R\to \I$ by assumption.
We estimate the next three terms similarly as
\EQ{
\int_0^T \int |u_{0k} | \be_R \,dx\,dt &\leq   C  \frac T {R^3} \int_{B_R(0)} |u_0| \,dx ,
}
\EQ{
\left|\int_0^T \int_0^t\int u_k(x,s)\Delta \be_R (x)\,dx\,ds \,dt
\right|
&\leq C T \frac {R^{3}} {R^5} \bigg(\frac 1 {R^3} \int_0^T\int |u|^2 \,dx\,dt \bigg)^{1/2},
}
and 
\EQ{
\left|
\int_0^T    \int_0^t \int u_k (x,s) u_j(x,s) \partial_j \be_R (x)\,dx\,ds			\,dt
\right|
&\leq C\frac {R^3} {R^4}   \frac T {R^3} \int_0^T\int |u|^2 \,dx\,dt  .
}
These all vanish as $R\to \I$.

For the near-field part of the pressure, note that $\partial_k \be_R (x) = {R^{-4}}( \partial_k \be) (x/R)$.
Then,
\EQ{
&\int_0^T \lim_{\e\to 0^+}\int_\e^t \int u_i u_j \th_R R_i R_j(\partial_k \be_R)\,dx\,ds\,dt
\\&\leq \frac T {R^4} \int_0^T\int \bigg| u_i u_j \th_R \,\pv\, \int K_{ij} (x-y)((\partial_k \be) (y/R) - ( \partial_k \be) (x/R ))\,dy \bigg|dx\,dt
\\&\leq  \frac T {R^4} \int_0^T\int\bigg| u_i u_j \th_R\,\pv\, \int_{|x-y|\leq 1} K_{ij}(x-y) ((\partial_k \be) (y/R) - ( \partial_k \be) (x/R ))\,dy\bigg| dx\,dt
\\&+ \frac T {R^4} \int_0^T\int \bigg| u_i u_j \th_R  \int_{|x-y|>1} K_{ij}(x-y) (\partial_k \be) (y/R)  \,dy \bigg|dx\,dt,
}
where we used that $K_{ij}$ is mean zero on spheres centered at the origin.
Since $\nb \be $ is Lipschitz, we have
\EQ{
&\frac T {R^4} \int_0^T\int \bigg| u_i u_j \th_R \,\pv\, \int_{|x-y|\leq 1} K_{ij} (x-y)((\partial_k \be) (y/R) - ( \partial_k \be) (x/R ))\,dy\bigg| dx\,dt
\\&\leq T
\frac { C}   {R^4} \int_0^T\int | u_i u_j\th_R| \int_{|x-y|\leq 1} \frac {|x/R-y/R|}{|x-y|^3}\,dy
\\&\leq C T \frac {1}   {R^5} \int_0^T\int | u_i u_j\th_R|   \,dx\,dt
\\&	\leq C T R^{-2} \|u\|_{L^2_\uloc(0,T)}^2 				\to 0 \, \text{as $R\to \I$}.
} 
On the other hand,
\EQ{
& \frac T {R^4} \int_0^T\int\bigg| u_i u_j \th_R  \int_{|x-y|>1} K_{ij}(x-y) (\partial_k \be) (y/R) \,dy\bigg|dx\,dt
\\&\leq \frac T {R^4} \int_0^T\int |u_i u_j \th_R|  \|   |x|^{-3} \|_{L^{p'}(|x|>1)} \| (\partial_k \be)(\cdot/R) \|_{L^p} \,dx\,dt,
}
where $p$ and $p'$ are H\"older conjugates. Note that 
\[
 \| (\partial_k \be)(\cdot/R) \|_{L^p} = R^{3/p} \| (\partial_k \be)(\cdot ) \|_{L^p}.
\]
Taking $3<p<\I$, we obtain 
\EQ{
& \frac T {R^4} \int_0^T\int \bigg| u_i u_j \th_R  \int_{|x-y|>1} K_{ij}(x-y) ((\partial_k \be) (y/R) - ( \partial_k \be) (x/R ))\,dy\bigg| dx\,dt
\\&\leq  C T\frac {R^{3/p}}{R^4} \int_0^T\int | u_i u_j \th_R|   \,dx\,dt
\\&\leq C T  R^{3/p-1} \|u\|_{L^2_\uloc(0,T)}^2  \to 0  \,\text{as $R\to \I$}.
}

 For the far-field part, we have, for $x\in B_R$ and $y\in B_{2R}^c$,
\EQ{
\int_0^T |\bar p_{\text{far}} (x,t)| \,dt &\leq  R \int_0^T   \int_{|y|>2R} \frac 1 {|y|^4} |u(y,t)|^2\,dy\,dt
\\&\leq C R\sum_{i=1}^\I \int_0^T   \int_{2^i R<|y|\leq 2^{i+1}R} \frac 1 {|y|^4} |u(y,t)|^2\,dy\,dt
\\&\leq C R\sum_{i=1}^\I  \frac 1 {R^4 2^{4i}}  \int_0^T \int_{2^i R<|y|\leq 2^{i+1}R}  |u(y,t)|^2\,dy\,dt 
\\&\leq C\|u \|_{L^2_\uloc (0,T)}^2.
}
Thus,
\[
\int_0^T \int_\e^t \int \bar p_{\text{far}}(x,s)\partial_k\be_R(x)\,dx\,ds\,dt \leq CT \|u \|_{L^2_\uloc (0,T)}^2  \frac 1 {R^4} R^3 \to 0  \,\text{as $R\to \I$}.
\]

We also need to prove $\Phi_R(t)\to 0$. Above, we have proven $\int_0^T |\phi_R(s) |\,ds\to 0$. Note that $\Phi_R(t) =\int_0^t \phi_R(s)\,ds$.  So, for all $t\in [0,T]$,
\[
|\Phi_R(t)|\leq \int_0^t |\phi_R(s)|\,ds \leq  \int_0^T |\phi_R(s) |\,ds\to 0,
\]
implying $\Phi_R\to 0$ in $L^\I ([0,T])$.

\end{proof}

\begin{proof}[Proof of Theorem~\ref{thrm.sufficient}]
Fix $\beta \in C_c^\I$ so that $\int \be \,dx=1$, and let $\beta_R(x) = R^{-3} \beta(x/R)$. Then $\int \be_R \,dx = 1$ for every $R>0$.   Therefore, for each $R>0$, the discussion at the beginning of Section~\ref{sec.proof1} applies, and we define $\phi_R = \phi$, $u_R=\td u$ and  $p_R=\td p$ accordingly. 
We now prove that $u_R \to u$ in~$\mathcal D'$.  Fix $\psi\in \mathcal D$.  We know that
\[
u(x,t)= u_R(x-\Phi_R(t),t)+\phi_R(t)
\]
in $\mathcal D' (\R^3\times (0,T))$ and
$\phi_R(t)\to 0$ in $\mathcal D' (0,T)$.  We then have
\[
\int_0^T\int u(x,t)\psi(x,t) \,dx\,dt
= \int_0^T\int \big(u_R(x-\Phi_R(t),t)+\phi_R(t) \big)\psi\,dx\,dt.
\]
Focusing on the last term, we see that
\[
\lim_{R\to \I}  \bigg| \int_0^T\int \phi_R(t)\psi\,dx\,dt\bigg| \leq \lim_{R\to \I} \bigg\|\int \psi(x,\cdot) \,dx\bigg\|_{L^\I(0,T)} \int_0^T  | \phi_R(t) |\,dt  =0.
\]
Hence,
\[
u(x,t)= \lim_{R\to \I} u_R(x-\Phi_R(t),t)
\]
in~$\mathcal D'$.  

To see that   $\nb p_R\to \nb \bar p$ in $\mathcal D'$, recall that
\[
\langle \nb p_R(t) , \psi  \rangle = \langle  \nb \bar p(t), \psi_{\Phi_R(t)}\rangle,
\]
where the subscript indicates the translation in the $x$ variable by $\Phi_R(t)$ (this notation was introduced in Section~\ref{sec.proof1}).
Since $\Phi(t)\to 0$ in $L^\I(0,T)$, we have $\psi_{\Phi_R(t)} \to \psi$ in the topology on~$\mathcal D$ (e.g.~by continuity of translations in the $L^p$ norms).  Here, we take $\mathcal D$ to be $\mathcal D(\R^3)$ (we do not have $\psi_{\Phi_R(t)} \in \mathcal D(\R^3\times (0,T))$).
Therefore, $\nb p_R\to \nb p$ in~$\mathcal D'(\R^3)$.   So, for every $t\in (0,T)$, we have $ \langle  \nb \bar p(t), \psi_{\Phi_R(t)}\rangle \to \langle \nb \bar p (t), \psi \rangle$, and we may use the~DCT to conclude that
\[
\lim_{R\to \I} \int_0^T \langle \nb p_R(t) , \psi  \rangle \,dt = \int_0^T \langle \nb \bar p , \psi \rangle \,dt.
\]

We now have $u_R\to u$ and $\nb p_R\to \nb \bar p$ in $\mathcal D'(\R^3\times (0,T))$, and $u_R$ and $p_R$ solve the Navier-Stokes equations.  This property is inherited by the limit, implying that $u$ and $\bar p$ solve the Navier-Stokes equations in~$\mathcal D'$. This proves $\nb p = \nb \bar p$ in~$\mathcal D'$.
 \end{proof}

\begin{proof}[Proof of Theorem~\ref{thrm.sufficient2}] 
By Theorem~\ref{thrm.sufficient}, $u$ satisfies the local pressure expansion as a distribution. 
Note that $u\in L^{8/3} (0,T;L^4(B)))$ for any ball~$B$. Then $\th u_iu_j\in L^2$ at a.e.~time, implying 
\[
\int u_iu_j \th R_iR_j (\nb \cdot\psi )\,dx = \int R_iR_j(u_iu_j\th) \nb \cdot \psi\,dx,
\]
for almost every~$t$. This shows that the near part of the distributional local pressure expansion   agrees with the near   part of the local pressure expansion in $\mathcal D'(\R^3\times (0,T))$, which we needed to show.
\end{proof}

\section{Appendix}

For convenience we compare the various sufficient conditions for either the local pressure expansion or mildness. Recall the three conditions discussed in Section~1,
\begin{align} 
&\lim_{|x_0|\to \I} \int_0^{R^2}\int_{B_R(x_0)} |u|^2\,dx\,dt =0,  \label{cond.A}
\\&\lim_{R\to \I} \sup_{x_0\in \R^3} \frac 1 {R^3} \int_0^T\int_{B_R(x_0)} |u|^2\,dx \,dt = 0, \label{cond.B}
\\&\lim_{R\to \I} \frac 1 {R^3} \int_0^T\int_{B_R(0)} |u|^2\,dx \,dt = 0,
\label{cond.C}
\end{align}
which appear in \cite{JiaSverak-minimal}, \cite{LR}, and Section~1, respectively.
The strongest condition is   \eqref{cond.A} and the weakest is \eqref{cond.C}, as the following assertions show:
\begin{itemize}
\item \eqref{cond.A} implies~\eqref{cond.B}. Assume $u$ satisfies~\eqref{cond.A}. Then, for any $\e>0$, there exists $R_\e$ so that, if $|x_0|>R_\e$, we have 
\[
\int \int_{B_1(x_0)} |u|^2\,dx\,dt <\e.
\]
Then,
\EQ{
&\frac 1 {R^3} \int\int_{B_R(x_0)} |u|^2\,dx\,dt 
\\& \leq \frac 1 {R^3} \int\int_{B_R(x_0) \cap B_{R_\e}(0)}  |u|^2\,dx\,dt + \frac 1 {R^3} \int\int_{B_R(x_0)\cap B_{R_\e}(0)^c}  |u|^2\,dx\,dt
\\&\leq \frac { C R_\e^3} {R^3} \|u\|_{L^2(0,T)}^2 + C\e.
}
Clearly this can be made arbitrarily small by first taking $\e$ small and then taking $R$ large in comparison to~$R_\e$.
\item \eqref{cond.B} implies~\eqref{cond.C}. This is obvious. 
\item \eqref{cond.B} does not imply~\eqref{cond.A}. A counterexample is the characteristic function $f$ for the cylinder of radius $1$ oriented along the $x_1$ axis. Then
\[
\frac 1 {R^3} \int_0^T\int_{B_R(x_0)} |f|^2 \,dx\,dt \leq \frac {C} {R^2} \to 0,
\]
but if $\hat x_k$ is a sequence on the $x_1$-axis so that $|\hat x_k|\to \I$, we have 
\[
\int_{B_1(\hat x_k)} |f|^2\,dx\,dt = |B_1(\hat x_k)|, 
\]
which does not vanish.
\item \eqref{cond.C} does not imply~\eqref{cond.B}. Let $\hat x_k = (  2^k,0,0)$ and let $f(x,t) = \sum_{ k\in \N } \chi_{B_{k}(\hat x_k)}$. This clearly does not satisfy \ref{cond.B} since 
\[
\frac 1 {k^3} \int_0^T\int_{B_{k}( \hat x_k)} |f|^2\,dx\,dt = T,
\]
does not vanish as $k\to \I$.  However, if $2^k \leq R < 2^{k+1}$, then
\[
 \frac 1 {R^3}\int_0^T\int_{B_R(0)}|u|^2\,dx\,dt\leq  C \frac T {R^3} \sum_{1\leq i \leq \lceil   \log_2 R   \rceil } i^3 \leq  \frac T{2^{3k}}       (k+1)^4,
\]
which vanishes as $R\to \I$. Hence, $f$ satisfies~\eqref{cond.C}.
\end{itemize}

The examples above can be modified to ensure they are divergence-free. 

We conclude with a  remark on the assumptions on the initial data. It is easy to verify that the condition in Corollary~\ref{thrm.sufficient2} is implied if $u_0\in E^2$.  Hence, Corollary~\ref{thrm.sufficient2} generalizes the condition in \cite{JiaSverak-minimal} (i.e.,~\eqref{cond.A}).  In contrast, in \cite[Theorem~11.1]{LR}, the initial data is not mentioned.

\end{document}